\documentclass[a4paper]{amsart}

\usepackage[a4paper,width={16cm},left=2.5cm,bottom=3cm, top=3cm]{geometry}
\usepackage{enumerate}

\usepackage[utf8]{inputenc}
\usepackage[english]{babel}

\usepackage{a4wide}
\usepackage{xcolor}
\usepackage{amsmath}
\usepackage{amssymb}
\usepackage{amsfonts}
\usepackage{amsthm,graphicx}

\usepackage{hyperref}

\newtheorem{theorem}{Theorem}[section]
\newtheorem{lemma}[theorem]{Lemma}

\newtheorem{proposition}[theorem]{Proposition}
\theoremstyle{definition}

\newtheorem{remark} [theorem] {Remark}

\newcommand{\la}{\lambda}
\newcommand{\norm}[1]{\left\lVert#1\right\rVert}
\newcommand{\pd}[2]{\frac{\partial#1}{\partial#2}}

\newcommand{\R}{\mathbb{R}}

\newcommand{\Tr}{\mathop{\rm{Tr}}}
\newcommand{\dive}{\mathop{\rm{div}}}

\begin{document}

\title[]{Sobolev-type regularity and Pohozaev-type identities for some
  degenerate and singular problems}

\author{Veronica Felli and Giovanni Siclari}
\address{Veronica Felli and Giovanni Siclari 
\newline \indent Dipartimento di Matematica e Applicazioni, Università
degli Studi di Milano-Bicocca,
\newline \indent Via Cozzi 55, 20125 Milano, Italy.}
\email{veronica.felli@unimib.it,  g.siclari2@campus.unimib.it}

\date{January 9, 2022.}

\begin{abstract}
  Sobolev-type regularity results are proved for solutions to a class of
  second order elliptic equations with a singular or degenerate
   weight, under non-homogeneous Neumann conditions.  As an
  application a Pohozaev-type identity for weak solutions is derived.
\end{abstract}

\maketitle

{\bf Keywords.} Degenerate and singular elliptic equations, Sobolev-type regularity, Pohozaev-type identities. 

\medskip 

{\bf MSC classification.}
35B65, 
35J70, 
35J75. 

\section{Introduction}
This note is concerned with the following class of second order elliptic
equations
\begin{equation}\label{eq:29}
  -\dive(t^{1-2s}A(x,t)\nabla U(x,t))+ t^{1-2s}c(x,t) =0, \quad x\in
  \R^N,\ t\in(0,+\infty),
\end{equation}
with the
weight $t^{1-2s}$ (being $s\in(0,1)$) which belongs to the second Muckenhoupt class and is singular
if $s>1/2$ and  degenerate if $s<1/2$;
we couple \eqref{eq:29} with 
non-homogeneous Neumann conditions
\begin{equation}\label{eq:13}
  \lim_{t \to 0^+}t^{1-2s}A(x,t) \nabla U(x,t) \cdot \nu =h U(x,0)+g(x)
\end{equation}
on the bottom of a half $(N+1)$-dimensional ball.

The interest in such a type of equations and related
regularity issues has developed starting from the pioneering paper
\cite{FKS}, proving local H\"older continuity results and  Harnack's
inequalities,  and has grown significantly in recent years stimulated
by the study of the fractional Laplacian in its 
realization  as a Dirichlet-to-Neumann map 
\cite{CS}.

In this context, among recent
regularity results for problems of type \eqref{eq:29}--\eqref{eq:13},
we mention 
\cite{CabreSire} and \cite{YYLi} for Schauder and gradient estimates with
$A$ being the identity matrix and $c\equiv0$.
More general
degenerate/singular equations of type \eqref{eq:29}, admitting a varying coefficient matrix
$A$, are considered in \cite{STV,STV2}. In \cite{STV}, under
suitable regularity assumptions on $A$ and $c$,  Hölder continuity and
$C^{1,\alpha}$-regularity are
established for  solutions to \eqref{eq:29}--\eqref{eq:13} in the case
$h\equiv g\equiv0$, which, up to a reflection through the hyperspace
$t=0$,
corresponds to the study of solutions to
the equation $-\dive(|t|^{1-2s}A\nabla U)+ |t|^{1-2s}c =0$
which are even with respect to the $t$-variable;  H\"older continuity
of solutions which are odd in $t$ is instead investigated  in
\cite{STV2}. In addition,  in
\cite{STV} $C^{0,\alpha}$ and $C^{1,\alpha}$ bounds are derived for some
inhomogeneous Neumann boundary problems (i.e. for $g\not\equiv 0$) in
the case $c\equiv 0$.

The goal of the present note is to derive Sobolev-type regularity
results for solutions to \eqref{eq:29}--\eqref{eq:13}.
Under suitable
assumptions on $c,h,g$, the
presence of the singular/degenerate homogenous weight, involving only
the $(N+1)$-th variable $t$, makes the  solutions to  have derivates
with respect to the first $N$ variables
$x_1,x_2,\dots,x_N$ belonging to a weighted $H^1$-space (with the same
weight $t^{1-2s}$); concerning the regularity of the derivative with respect to $t$, we
obtain instead that the weighted derivative $t^{1-2s}\frac{\partial U}{\partial
  t}$ belongs to a $H^1$-space with the dual weight $t^{2s-1}$,
confirming what has already been observed in \cite[Lemma 7.1]{STV} for
even solutions of the reflected problem corresponding to
\eqref{eq:29}--\eqref{eq:13} with $h\equiv g\equiv0$.

Our motivation for studying this question lies in the search for the
minimal regularity needed to prove Pohozaev-type identities for
solutions of the extended problem, resulting from the
Caffarelli-Silvestre extension for the fractional Laplacian;
Pohozaev-type identities can in turn be used to obtain Almgren-type
monotonicity formulas in the spirit of \cite{MR3169789}. Indeed, the
Sobolev-type regularity results obtained in Theorem
\ref{H2-regularity} allow us to directly obtain  a Pohozaev-type
identity (Proposition
\ref{p:poho}), without requiring $C^1$-regularity for the potential $h$ as
in \cite{MR3169789} and without approximating potentials in Sobolev
spaces with smooth ones as done in \cite{de2021strong}.
Furthermore,
the presence of the matrix $A$  makes our results applicable even to the
problem modified by a diffeomorphic deformation of the domain, which
straightens a $C^{1,1}$-boundary and
produces the appearance of a variable  coefficient matrix $A$, satisfying
conditions \eqref{A-form}, \eqref{A-regularity}, and
\eqref{A-ellipticity}; such a procedure 
is useful to study the behaviour of solutions at the boundary, see
e.g. \cite{de2021strong}. In the forthcoming paper \cite{DLFS} the
Pohozaev-type identity established in Proposition
\ref{p:poho} will be used to derive Almgren type monotonicity
formulas and unique continuation from the boundary for solutions of an
extended problem associated to the spectral fractional Laplacian.

\bigskip\noindent{\bf Acknowledgements.} This paper is gratefully dedicated 
to the memory of Prof. Antonio Ambrosetti. 

\section{Statement of the main results}
Let  $s \in (0,1)$, $N \in \mathbb{N}$, $N >2s$ and $z=(x,t) \in \R^N\times [0,\infty)$. Let 
\begin{equation}
\R^{N+1}_+:=\R^N\times (0,\infty),
\end{equation}
and, for  any $r >0$,
\begin{align*}
&B_r^+:=\{z \in  \R^{N+1}_+: |z|< r \}, \quad
B_r'=\{x \in \R^N: |x|<r\},\\
&S_r^+:=\{z \in  \R^{N+1}_+: |z|=r \}, \quad
S_r':=\{x \in  \R^{N}: |x|=r \}.
\end{align*}
For all $r >0$ and $\phi\in C^\infty(\overline {B^+_r})$ we define
\begin{equation}\label{H11-2s-norm}
  \norm{\phi}_{H^1(B_r^+,t^{1-2s})}:=\left(\int_{B_r^+} t^{1-2s}
    (\phi^2+|\nabla \phi|^2)
    \, dz\right)^{\frac{1}{2}} 
\end{equation}	
and $H^1(B_r^+,t^{1-2s})$ as the completion  of
$C^\infty(\overline{B_r^+})$ with respect to the norm defined in \eqref{H11-2s-norm}. 
Thanks to  \cite[Theorem 11.11, Theorem 11.2, 11.12 Remarks (iii)]{MR802206}, for any $r >0$, the space $H^1(B_r^+,t^{1-2s})$ can be explicitly characterized as  
\begin{equation}\label{H11-2s-explicit}
  H^1(B_r^+,t^{1-2s})=\left\{
    w \in W^{1,1}_{\rm loc}(B_r^+):\int_{B_r^+} t^{1-2s} (w^2+|\nabla w|^2)\, dz< +\infty\right\}.
\end{equation}
We observe that $H^1(B_r^+,t^{1-2s})\subset
  W^{1,1}(B_r^+)$, hence, denoting as $\Tr$ the classical trace
  operator from $W^{1,1}(B_r^+)$ to $L^1(B_r')$, we may consider its
  restriction to  $H^1(B^+_{r},t^{1-2s})$; furthermore, for any
$r>0$, such a restriction (still denoted as $\Tr$) turns out to be  a linear, continuous trace operator
\begin{equation}\label{Tr}
	\Tr:H^1(B^+_{r},t^{1-2s}) \to H^s(B'_{r})
\end{equation}
which is onto, see
\cite{MR3023003,MR0350177},  and \cite[Proposition 2.1]{YYLi}, where
$H^s(B'_r)$ denotes the usual fractional Sobolev space.

Let $R>0$ and let $\nu$ be the outer normal vector to $B_R^+$ on $B_R'$, that is $\nu(x)=(0,\dots,0,-1)$ for any $x \in B_R'$.
We are interested in proving Sobolev-type regularity results for a weak solution $U \in H^1(B_R^+,t^{1-2s})$ of the  problem
\begin{equation}\label{prob-regularity}
\begin{cases}
-\dive(t^{1-2s}A\nabla U)+ t^{1-2s}c =0, &\text{ on } B_R^+,\\
 \lim_{t \to 0^+}t^{1-2s}A \nabla U \cdot \nu =h \Tr(U)+g , &\text{ on } B_R',  
\end{cases}	
\end{equation}  
under suitable regularity hypotheses
on the matrix-valued function $A$ and the functions $c,h,g$.
More precisely we make the  following assumptions:
\begin{align} 
&A(z)=\left(\begin{array}{c|c}
B(z) &0 \\ \hline	0&\alpha(z)\end{array}\right) \quad \text{for any } z\in   \overline{B_R^+},  \label{A-form} \\
&B \in W^{1,\infty}(B^+_R,\R^{N\times N}) \text{ is symmetric}, \quad  \alpha \in W^{1,\infty}(B^+_R,\R), \label{A-regularity} \\
&\text{there exist $\lambda_1,\lambda_2>0$ s.t. }\la_1|y|^2 \le A(z)y \cdot y \le \la_2 |y|^2  \quad \text{for all } z \in  \overline{B_R^+} \text{ and } y \in \R^{N+1}, \label{A-ellipticity}\\
&g \in W^{1,\frac{2N}{N+2s}}(B'_R), \quad  h \in W^{1,\frac{N}{2s}}(B'_R), \label{g-h-hypo} \\
& c \in L^2(B_R^+,t^{1-2s}). \label{b-c-hypo}
\end{align} 
To state the weak formulation of \eqref{prob-regularity}, let, for any $r >0$, 
\begin{equation}\label{H10Sr+}
H^1_{0,S_r^+}(B_r^+,t^{1-2s}):=\overline{\{\phi \in C^{\infty}(\overline{B^+_r}): \phi =0 \text{ on } S_r^+\}}^{\norm{\cdot}_{H^1(B_r^+,t^{1-2s})}}.	
\end{equation}
Under this conditions,  we define a weak solution   $U$ of \eqref{prob-regularity}  as a function 
$U \in  H^1(B_R^+,t^{1-2s})$ such that 
\begin{equation}\label{eq-regularity}
\int_{B^+_R}t^{1-2s} A \nabla U \cdot \nabla \phi \, dz + \int_{B^+_R}t^{1-2s}c \phi  \, dz 
=\int_{B_R'} [g+ h \Tr(U) ]\Tr(\phi) \, dx,
\end{equation}
for any $ \phi \in H^1_{0,S_R^+}(B_R^+,t^{1-2s})$. 

In the next section (see Remark \ref{equation-well-posed})
we will recall inequality \eqref{ineq-Sobolev-trace} ensuring that each term in
\eqref{eq-regularity} is finite, so that the above definition  is well posed.

Our main result is the following theorem.
\begin{theorem}\label{H2-regularity}
  Let $U$ be a weak solution of \eqref{prob-regularity} in the sense
  of \eqref{eq-regularity}. If assumptions \eqref{A-form},
  \eqref{A-regularity},\eqref{A-ellipticity}, \eqref{g-h-hypo},
  \eqref{b-c-hypo} are satisfied, then
  \begin{equation}\label{eq:15}
\nabla_x U \in H^1(B_r^+,t^{1-2s})\quad\text{and}\quad
t^{1-2s}\frac{\partial U}{\partial t}\in H^1(B_r^+,t^{2s-1})
  \end{equation}
  for all $r \in (0,R)$. Furthermore
       \begin{multline}\label{eq:estimate}
  \|\nabla_x U\|_{H^1(B_{r}^+,t^{1-2s})}+
  \left\|t^{1-2s}\frac{\partial U}{\partial t}\right\|_{H^1(B_{r}^+,t^{2s-1})}
  \\\leq C
  \left(
    \norm{U}_{H^1(B^+_{R},t^{1-2s})}+\norm{{c}}_{L^2(B^+_{R},t^{1-2s})}
   +\norm{g}_{W^{1,\frac{2N}{N+2s}}(B_{R}')} \right)
\end{multline}
for a positive constant $C>0$ depending only on $N$, $s$, $r$, $R$,
$\norm{h}_{W^{1,\frac{N}{2s}}(B'_{R})}$, $\lambda_1$, $\norm{A}_{W^{1,\infty}(B_R^+,\R^{(N+1)^2})}$ (but independent of $U$).  
\end{theorem}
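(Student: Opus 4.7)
The plan is to first establish Sobolev regularity of the tangential derivatives $\nabla_x U$ via Nirenberg's difference-quotient method in the $x$-directions, and then to extract the regularity of the weighted normal derivative $t^{1-2s}\partial_t U$ directly from the equation, exploiting the block structure of $A$ and the fact that the weight $t^{1-2s}$ depends only on $t$.

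For the tangential step, fix an intermediate radius $r < r' < R$ and choose a cutoff $\eta \in C^\infty_c(B_{r'}^+)$ with $\eta\equiv 1$ on $B_r^+$ and $\eta\equiv 0$ near $S_R^+$. For $i\in\{1,\dots,N\}$ and small $|h|>0$, the tangential difference quotient $D^i_h\psi(x,t):=[\psi(x+he_i,t)-\psi(x,t)]/h$ commutes with $t^{1-2s}$. Plug $\phi := \eta^2 D^i_{-h} D^i_h U\in H^1_{0,S_R^+}(B_R^+,t^{1-2s})$ into \eqref{eq-regularity}, move one $D^i_h$ onto the test function via discrete integration by parts, and expand via the discrete Leibniz rule. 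Using ellipticity \eqref{A-ellipticity}, the $W^{1,\infty}$-bound \eqref{A-regularity} on $A$, and Young's inequality to absorb quadratic terms in $\eta D^i_h\nabla U$, one obtains a uniform-in-$h$ bound on $\|\eta D^i_h \nabla U\|_{L^2(B^+_R,t^{1-2s})}$ by the right-hand side of \eqref{eq:estimate} plus a boundary contribution of the form
\begin{equation*}
I_{\mathrm{bdy}}(h) = -\int_{B'_R} D^i_h\bigl[\eta^2(g+h\,\Tr U)\bigr]\,\Tr(D^i_h U)\,dx.
\end{equation*}
After the discrete Leibniz expansion of $I_{\mathrm{bdy}}(h)$, applying Hölder's inequality together with the trace \eqref{Tr} and the Sobolev embedding $H^s(B'_R)\hookrightarrow L^{2N/(N-2s)}(B'_R)$ yields uniform-in-$h$ control: the conjugate exponents of $2N/(N-2s)$ are precisely $2N/(N+2s)$ and $N/(2s)$, matching the regularity hypotheses \eqref{g-h-hypo} on $g$ and $h$. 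Letting $h\to 0^+$ via the standard weak-derivative criterion gives $\nabla_x U\in H^1(B_r^+,t^{1-2s})$ with an estimate of the form \eqref{eq:estimate}.

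For the normal derivative, the block structure \eqref{A-form} and the $x$-independence of the weight give
\begin{equation*}
\partial_t\bigl(t^{1-2s}\alpha\,\partial_t U\bigr) = t^{1-2s}\bigl[c - \dive_x(B\nabla_x U)\bigr]
\end{equation*}
as a distributional identity on $B_R^+$. Set $W:=t^{1-2s}\alpha\,\partial_t U$. A short computation shows that the right-hand side lies in $L^2(B_r^+,t^{2s-1})$ by Step 1 and \eqref{b-c-hypo}, whence $\partial_t W\in L^2(B_r^+,t^{2s-1})$. Differentiating the defining formula for $W$ tangentially yields $\partial_{x_j}W = t^{1-2s}[(\partial_{x_j}\alpha)\partial_t U + \alpha\,\partial_t\partial_{x_j}U]$, so the $W^{1,\infty}$-bound on $\alpha$ combined with $\partial_t\partial_{x_j}U \in L^2(B_r^+,t^{1-2s})$ (from Step 1) gives $\nabla_x W\in L^2(B_r^+,t^{2s-1})$; finally $W\in L^2(B_r^+,t^{2s-1})$ is immediate from $\partial_t U\in L^2(B^+_R,t^{1-2s})$. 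Hence $t^{1-2s}\partial_t U\in H^1(B_r^+,t^{2s-1})$ with the desired bound.

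The hardest step is the boundary term $I_{\mathrm{bdy}}(h)$: one must verify that the discrete Leibniz expansion, combined with Hölder and the weighted-trace/Sobolev embeddings, closes with the precise exponents of \eqref{g-h-hypo} and that all constants depend only on the data listed in the statement. A complementary technical issue is that tangential difference quotients are only defined on a shifted subset of $B^+_R$, which is the reason for working with an intermediate radius $r'\in(r,R)$ before applying the weak-derivative criterion.
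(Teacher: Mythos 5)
Your overall strategy coincides with the paper's: tangential Nirenberg difference quotients to get $\nabla_x U\in H^1(B_r^+,t^{1-2s})$, followed by reading $\partial_t\bigl(t^{1-2s}\alpha\,\partial_t U\bigr)$ off the equation using the block structure \eqref{A-form}. The second step is sound as you describe it (the weight computations $\int t^{2s-1}\,t^{2-4s}|\cdot|^2=\int t^{1-2s}|\cdot|^2$ all check out, and dividing by $\alpha$ is harmless by \eqref{A-regularity} and \eqref{A-ellipticity}).

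There is, however, one genuine gap in the tangential step: the term of $I_{\mathrm{bdy}}(h)$ in which both difference quotients fall on $U$, namely $\int_{B_R'}\eta^2\,h\,|\Tr(D_h^i U)|^2\,dx$, is \emph{exactly} quadratic in the quantity you are trying to bound. Via \eqref{ineq-Sobolev-trace} and H\"older it is controlled by $\mathcal S_{N,s}\,\|h\|_{W^{1,N/(2s)}}\,\|\nabla(\eta D_h^i U)\|^2_{L^2(t^{1-2s})}$, and Young's inequality cannot absorb a term with no subcritical power to spare: the estimate closes only if $\mathcal S_{N,s}\|h\|_{W^{1,N/(2s)}}$ on the relevant ball is strictly below the ellipticity constant $\lambda_1$, which need not hold on $B_R'$. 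The paper resolves this by first shrinking the localization radius so that $\|h\|_{L^{N/(2s)}}$ there is small (absolute continuity of the integral), and then recovering an arbitrary $r<R$ by a finite covering of $\overline{B_r'}$ with small half-balls, combined with classical interior elliptic regularity on $\{t>\delta\}$ where the weight is Lipschitz. An alternative fix is to split $h=h_1+h_2$ with $\|h_1\|_{L^{N/(2s)}}$ small and $h_2\in L^\infty$, using an $\varepsilon$-interpolation for the $L^2(B_R')$-trace to handle $h_2$. Either way, you must add such a smallness/covering mechanism; as written, "Young's inequality to absorb quadratic terms" does not cover this term, and the covering step is also needed to reach all $r\in(0,R)$ rather than only a small half-ball near the origin. (Minor: your cutoff should be compactly supported in $B_{r'}^+\cup B_{r'}'$, not in the open set $B_{r'}^+$, or the boundary terms and the regularity up to $t=0$ are lost.)
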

The proof of Theorem \ref{H2-regularity} is based on the classical Nirenberg difference quotient method \cite{nirenberg}.

\medskip \medskip

As an application of Theorem \ref{H2-regularity} we prove a Pohozaev-type identity for weak solutions of \eqref{prob-regularity}. 
To this aim we require that the matrix-valued function
  $A$ satisfies, besides assumptions \eqref{A-form},
  \eqref{A-regularity}, and \eqref{A-ellipticity}, also the condition
  \begin{equation}
    \label{eq:25}
    A(0)=\mathop{\rm Id}\nolimits_{N+1}
  \end{equation}
  where $\mathop{\rm Id}_{N+1}$ is the identity $(N+1)\times(N+1)$ matrix.

We first introduce some notation. Let 
\begin{align}
	&\mu(z):=\frac{A(z)z \cdot z}{|z|^2} \quad \text { and } \quad \beta(z):=\frac{A(z)z }{\mu(z)} \quad \text{for any } z\in \overline{B_R^+}\setminus \{0\} \label{mu-beta},\\
	&\beta'(x):=\beta(x,0)  \quad \text{ for any  } x \in \overline{B_R'}\setminus\{0\}. \label{beta'}
\end{align} 
We also define $dA(z)yy$, for every $y=(y_1,\dots,y_{N+1}) \in \R^{N+1}$ and
$z \in  \overline{B_R^+}$, as the vector of $\R^{N+1}$ with $i$-th component given by
\begin{equation}\label{dA}
	(dA(z)yy)_i=\sum_{h,k=1}^{N+1}\pd{a_{kh}}{z_i}(z)y_h y_k,\quad 
i=1,\dots,N+1,
\end{equation}
where we have defined the matrix $A=(a_{kh})_{k,h=1,\dots, N+1}$ in \eqref{A-form}. 

\begin{remark}
From \eqref{A-regularity},  \eqref{A-ellipticity}, and
  \eqref{eq:25} it  easily follows that 
\begin{align}\label{everything-bounded}
& \mu  \in C^{0,1}(\overline {B^+_{R}}), \quad  
\frac{1}{\mu}  \in C^{0,1}(\overline {B^+_{R}}), \quad  \beta \in   C^{0,1}(\overline {B^+_{R}},\R^{N+1}), \\
&\notag J_\beta \in L^\infty(B^+_{R}, \R^{(N+1)^2}),  \quad\dive(\beta) \in  L^\infty(B^+_{R}),\\
 &\beta' \in L^\infty(B_{R}',\R^N), 
\quad  \dive(\beta') \in  L^\infty(B'_{R}),\notag 
\end{align}
where $J_\beta$ is the Jacobian matrix of $\beta$.
\end{remark}

\begin{proposition}\label{p:poho}
  Under assumptions \eqref{A-form},
  \eqref{A-regularity}, \eqref{A-ellipticity}, \eqref{g-h-hypo},
  \eqref{b-c-hypo}, and \eqref{eq:25}, let 
  $U$ be a solution of \eqref{eq-regularity}. Then for a.e. $r \in (0, R)$
\begin{align} \label{eq-Poho-identity}
\frac{r}{2}\int_{S_r^+} &t^{1-2s}A \nabla U \cdot \nabla U \, dS-r\int_{S_r^+}t^{1-2s}\frac{| A \nabla U \cdot \nu|^2} {\mu}\,dS\\
&\quad+\frac{1}{2}\int_{B_r'}(\mathop{\rm{div}}\nolimits_x(\beta')h +\beta'\cdot \nabla h)|\Tr(U)|^2 \, dx-\frac{r}{2} \int_{S_r'} h|\Tr(U)|^2 \, dS'\notag \\
&\quad+\int_{B_r'}(\mathop{\rm{div}}\nolimits_x(\beta')g +\beta'\cdot \nabla g)\Tr(U) \, dx-r \int_{S_r'} g\Tr(U)\, dS'\notag \\
&=\frac{1}{2}\int_{B_r^+}t^{1-2s} A \nabla U \cdot \nabla U
                                                                                                                              \dive(\beta) \, dz -
                                                                                                                              \int_{B_r^+} t^{1-2s} c (\nabla U \cdot \beta) \, dz \notag \\
&\quad-\int_{B_r^+}t^{1-2s}J_\beta(A \nabla U) \cdot \nabla U \, dz +\frac{1}{2}\int_{B_r^+}t^{1-2s} (dA \nabla U \nabla U) \cdot \beta \, dz\notag\\
&  \quad+\frac{1-2s}{2}\int_{B_r^+}t^{1-2s} \frac{\alpha}{\mu} A \nabla U \cdot \nabla U \, dz,\notag 
\end{align}
where  $\nu$ is the outer normal vector to $B_r^+$ on  $S_r^+$, that is $\nu(z)=\frac{z}{|z|}$.
\end{proposition}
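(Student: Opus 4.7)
My plan is the classical Pohozaev strategy: multiply the PDE in \eqref{prob-regularity} by the ``deformed radial derivative'' $\beta\cdot\nabla U$ and integrate over $B_r^+$. The vector field $\beta=Az/\mu$ is engineered precisely so that, on $S_r^+$, $\beta\cdot\nu=r$ (since $Az\cdot z=\mu|z|^2$ and $\nu=z/|z|$) and $\beta\cdot\nabla U=r(A\nabla U\cdot\nu)/\mu$ (by symmetry of $A$), while on $B_r'$ the block form \eqref{A-form} together with $t=0$ forces $\beta_{N+1}|_{B_r'}=0$, so $\beta'$ is purely tangential to the flat boundary. Assumption \eqref{eq:25}, together with \eqref{A-regularity}--\eqref{A-ellipticity}, guarantees $\mu$ is bounded away from zero and yields the regularity of $\beta$, $\mu$ and $\dive(\beta)$ recorded in \eqref{everything-bounded}. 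Theorem \ref{H2-regularity} ensures that $\beta\cdot\nabla U\in H^1(B_r^+,t^{1-2s})$ for every $r\in(0,R)$, so that $\eta_\e(|z|)\,\beta\cdot\nabla U$ is an admissible test function in \eqref{eq-regularity}, where $\eta_\e$ is a smooth radial cutoff equal to $1$ on $B_{r-\e}^+$ and vanishing near $S_R^+$. Letting $\e\to 0^+$ via a coarea argument (which is the source of the ``a.e.\ $r$'' qualifier) and applying the Neumann condition \eqref{eq:13} in the trace on $B_r'$, one obtains
\begin{align*}
\int_{B_r^+}t^{1-2s}A\nabla U\cdot\nabla(\beta\cdot\nabla U)\,dz &+ \int_{B_r^+}t^{1-2s}c\,(\beta\cdot\nabla U)\,dz\\
&= \int_{S_r^+}t^{1-2s}(A\nabla U\cdot\nu)(\beta\cdot\nabla U)\,dS + \int_{B_r'}(h\Tr(U)+g)(\beta'\cdot\nabla_x\Tr(U))\,dx.
\end{align*}

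The interior left-hand side is then treated via the algebraic identity
\[
A\nabla U\cdot\nabla(\beta\cdot\nabla U) = J_\beta(A\nabla U)\cdot\nabla U + \tfrac12\,\beta\cdot\nabla(A\nabla U\cdot\nabla U) - \tfrac12\,\beta\cdot(dA\,\nabla U\,\nabla U),
\]
obtained by expanding $\nabla(\beta\cdot\nabla U)=J_\beta^T\nabla U+D^2U\,\beta$ and symmetrizing the Hessian piece using the symmetry of $A$. The middle gradient term is integrated by parts against the weight using $\dive(t^{1-2s}\beta) = t^{1-2s}\bigl(\dive(\beta)+(1-2s)\alpha/\mu\bigr)$, which follows from $\partial_t t^{1-2s}=(1-2s)t^{-2s}$ and $\beta_{N+1}=\alpha t/\mu$; the $B_r'$ boundary contribution vanishes because $\beta\cdot\nu=0$ there, leaving the $\frac12\dive(\beta)$ interior term, the ``degenerate'' $\frac{1-2s}{2}\,\alpha/\mu$ interior term, and the surface integral $\frac{r}{2}\int_{S_r^+}t^{1-2s}(A\nabla U\cdot\nabla U)\,dS$. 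Combined with $\beta\cdot\nabla U=r(A\nabla U\cdot\nu)/\mu$ in the first surface term, this recovers the two $S_r^+$-lines of \eqref{eq-Poho-identity} with the correct signs.

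The $B_r'$-contribution is then reshaped by writing $h\,\Tr(U)\,\beta'\cdot\nabla_x\Tr(U) = \tfrac12\,h\,\beta'\cdot\nabla_x\bigl(|\Tr(U)|^2\bigr)$ and integrating by parts in $x$ on the $N$-dimensional ball $B_r'$ (and analogously for the $g$-term); the outer normal on $S_r'\subset\R^N$ is $\nu'=x/|x|$, and the analogue $\beta'\cdot\nu'=r$ on $S_r'$ (valid by the same reason as $\beta\cdot\nu=r$ on $S_r^+$, specialized to $t=0$) produces exactly the four remaining lines of \eqref{eq-Poho-identity}. The main obstacle, in my view, is the rigorous justification of the integration by parts on $B_r^+$, since $\beta\cdot\nabla U$ is not an admissible test function in the weak formulation \eqref{eq-regularity} (it does not vanish on $S_R^+$); this is handled by the cutoff-plus-coarea scheme above, and it is here that Theorem \ref{H2-regularity} is indispensable: the pairing of $\nabla_x U\in H^1(B_r^+,t^{1-2s})$ with $x$-derivatives of $\nabla U$, and of $t^{1-2s}\partial_t U\in H^1(B_r^+,t^{2s-1})$ with $t$-derivatives of $\nabla U$ (the weights $t^{1-2s}$ and $t^{2s-1}$ being mutually dual), yields exactly the Lebesgue integrability of $t^{1-2s}A\nabla U\cdot\nabla(\beta\cdot\nabla U)$ needed to make the computation rigorous.
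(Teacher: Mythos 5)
Your strategy is essentially the one the paper follows: the paper packages your two-step computation (test the equation with $\beta\cdot\nabla U$, then expand $A\nabla U\cdot\nabla(\beta\cdot\nabla U)$) into a single Rellich--Ne\v cas divergence identity, but the resulting terms, the role of Theorem \ref{H2-regularity}, the coarea/cutoff device for a.e.\ $r$, and the tangential integration by parts on $B_r'$ all coincide with what you describe, and your sign bookkeeping is correct.

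Two steps, however, are dismissed too quickly. First, when you integrate $\tfrac12\int t^{1-2s}\beta\cdot\nabla(A\nabla U\cdot\nabla U)$ by parts, the contribution from the flat part of the boundary does not vanish merely ``because $\beta\cdot\nu=0$ there'': $A\nabla U\cdot\nabla U$ is only known to lie in $L^1$ with weight $t^{1-2s}$ (indeed $\partial_t U$ may behave like $t^{2s-1}$ near $t=0$), so one must work on $B^+_{r,\delta}$, where the boundary term equals $\delta^{2-2s}\int_{B'_{\sqrt{r^2-\delta^2}}}\tfrac{\alpha}{\mu}(A\nabla U\cdot\nabla U)(x,\delta)\,dx$, and show this tends to $0$ along a suitable sequence $\delta_n\to0^+$; the paper does this by contradiction, using that $t^{1-2s}\tfrac{\alpha}{\mu}A\nabla U\cdot\nabla U\in L^1(B_R^+)$ (otherwise the boundary term would force $\int_0^{\bar r}t^{-1}\,dt<\infty$). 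Second, identifying the trace of $\beta\cdot\nabla U$ on $B_r'$ with $\beta'\cdot\nabla_x\Tr(U)$ requires proving $\Tr\big(\tfrac{\alpha}{\mu}\,t\,\partial_t U\big)=0$, which is not immediate since $\partial_t U$ need not be bounded up to $t=0$; the paper factors $t\,\partial_t U=t^{2s}\cdot\big(t^{1-2s}\partial_t U\big)$ with $t^{2s}\in H^1(B_r^+,t^{1-2s})$ and $t^{1-2s}\partial_t U\in H^1(B_r^+,t^{2s-1})$ and invokes the product-trace identity \eqref{eq-trace-product-2} for dual weights. With these two justifications supplied, your argument is complete.
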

\begin{remark}
The two integrals in the first line of \eqref{eq-Poho-identity}  must
be understood for a.e. $r\in(0,R)$ as explained in Remark \ref{coarea-well-defined}.

The integrals over $S_r'$ in \eqref{eq-Poho-identity} can
  be instead understood in the classical trace sense. Indeed, $h\in
  W^{1,\frac N{2s}}(B_r')$ by \eqref{g-h-hypo} and $(\Tr(U))^2\in
  W^{1,\frac N{N-2s}}(B_r')$ thanks to \eqref{eq:15} and
  \eqref{Tr-restiction-H2}; then  $h$ has a trace on $S_r'$
  belonging to $L^{\frac N{2s}}(S_r')$ and $(\Tr(U))^2$ has a trace on
  $S_r'$ belonging to $L^{\frac N{N-2s}}(S_r')$, so that $h(\Tr(U))^2$ has a trace on
  $S_r'$ belonging to $L^{1}(S_r')$ for all $r\in(0,R)$. Moreover
 $g\in
  W^{1,\frac {2N}{N+2s}}(B_r')$ by \eqref{g-h-hypo} and $\Tr(U)\in
  W^{1,\frac {2N}{N-2s}}(B_r')$ thanks to \eqref{eq:15} and
  \eqref{Tr-restiction-H2}; then, on $S_r'$,  $g$ has a trace in 
  $L^{\frac {2N}{N+2s}}(S_r')$ and $\Tr(U)$ has a trace in $L^{\frac
    {2N}{N-2s}}(S_r')$, so that $g \Tr(U)$ has a trace on
  $S_r'$ belonging to $L^{1}(S_r')$ for all $r\in(0,R)$.
\end{remark}

\section{Preliminaries: traces and inequalities}
In this  section we collect some preliminary results that will be used throughout the paper.

For any $i=1, \dots, N+1$, let $e_i=(\delta_{i,j})_{j=1, \dots, N+1}
\in \R^{N+1}$ be the vector with $i$-th component equal to $1$ and all
the remaining components equal to $0$.
 
It is well know that if $w \in W^{1,p}(\Omega)$ with
$\Omega \subset \R^{N+1}$ open and $p\in [1, \infty)$, then, for any
$i=1,\dots, N+1$ and $k \in \R$,
\begin{equation}\label{incremet-vs-derivative}
	\int_{\Omega_{k,i}}\frac{|u(x+ke_i)-u(x)|^p}{|k|^p} \le \int_{\Omega}\left|\pd{u}{x_i}\right|^p \, dx <+\infty,
\end{equation}
where $\Omega_{k,i}:=\{x \in \Omega:x +\tau ke_i \in \Omega \text{ for
  any }\tau \in [0,1]\}$, see e.g. \cite[Theorem 10.55]{MR2527916}. We
prove below an analogous result for the weighted space $H^1(B^+_{r},t^{1-2s})$.
\begin{lemma}
	For any  $r>0$, $w \in H^1(B^+_{r},t^{1-2s})$, $i=1,\dots, N$, and $k \in \R$
	\begin{equation}\label{incremet-vs-derivative-frac}
		\int_{B^+_{r,k,i}} t^{1-2s}\frac{|w(z+ke_i)-w(z)|^2}{|k|^2}dz \le \int_{B^+_{r}} t^{1-2s}\left|\pd{w}{x_i}\right|^2 \, dz,
	\end{equation}
	where $B^+_{r,k,i}:=\{z \in B^+_{r}:z +\tau ke_i \in B^+_{r} \text{ for any } \tau \in [0,1]\}$.
\end{lemma}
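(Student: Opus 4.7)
The plan is to prove the smooth case first, then extend by density. The key structural observation is that since $i\in\{1,\dots,N\}$, the translation $z\mapsto z+ke_i$ acts only in the $x$-variables and does not alter the $t$-coordinate; consequently the weight $t^{1-2s}$ is invariant under this translation, which is what allows the classical argument to go through in the weighted setting.

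First I would take $\phi\in C^\infty(\overline{B_r^+})$ and, for $z\in B_{r,k,i}^+$, use the fundamental theorem of calculus to write
\begin{equation*}
\phi(z+ke_i)-\phi(z)=k\int_0^1 \pd{\phi}{x_i}(z+\tau k e_i)\,d\tau.
\end{equation*}
An application of the Cauchy--Schwarz inequality (or Jensen) then gives
\begin{equation*}
\frac{|\phi(z+ke_i)-\phi(z)|^2}{|k|^2}\le \int_0^1 \left|\pd{\phi}{x_i}(z+\tau k e_i)\right|^2 d\tau.
\end{equation*}
Multiplying by $t^{1-2s}$, integrating over $B_{r,k,i}^+$, and applying Fubini--Tonelli, for each fixed $\tau\in[0,1]$ I perform the change of variable $z\mapsto z+\tau ke_i$. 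Since the $i$-th coordinate is in the $x$-direction, the weight $t^{1-2s}$ is preserved, and by definition of $B_{r,k,i}^+$ the translated set $B_{r,k,i}^++\tau ke_i$ is contained in $B_r^+$. Therefore
\begin{equation*}
\int_{B_{r,k,i}^+}t^{1-2s}\left|\pd{\phi}{x_i}(z+\tau ke_i)\right|^2 dz\le \int_{B_r^+}t^{1-2s}\left|\pd{\phi}{x_i}\right|^2 dz,
\end{equation*}
uniformly in $\tau$, yielding the desired inequality for smooth $\phi$.

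To extend to $w\in H^1(B_r^+,t^{1-2s})$, I would invoke the definition of $H^1(B_r^+,t^{1-2s})$ as the completion of $C^\infty(\overline{B_r^+})$ with respect to the weighted norm \eqref{H11-2s-norm} to select a sequence $\phi_n\in C^\infty(\overline{B_r^+})$ with $\phi_n\to w$ in $H^1(B_r^+,t^{1-2s})$. Passing to a subsequence (not relabeled) I may assume $\phi_n\to w$ a.e. in $B_r^+$; since translation by $ke_i$ preserves the Lebesgue measure, also $\phi_n(\cdot+ke_i)\to w(\cdot+ke_i)$ a.e. on $B_{r,k,i}^+$. The right-hand side of the inequality converges to $\int_{B_r^+}t^{1-2s}|\partial w/\partial x_i|^2\,dz$ by the $L^2(B_r^+,t^{1-2s})$-convergence of the derivatives, while Fatou's lemma applied to the nonnegative integrands $t^{1-2s}|\phi_n(z+ke_i)-\phi_n(z)|^2/|k|^2$ on the left yields the claimed inequality for $w$.

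The only delicate point is the density step and, within it, checking that the pointwise a.e. convergence survives under the translation; this is immediate because translations are measure-preserving, so sets of measure zero are mapped to sets of measure zero. Aside from that, everything reduces to the standard Nirenberg-type inequality, with the crucial feature that restricting to $i\le N$ keeps the singular/degenerate weight inert under the translation.
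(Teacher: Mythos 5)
Your proof is correct, and the core computation (fundamental theorem of calculus along the segment, Cauchy--Schwarz in $\tau$, Fubini--Tonelli, and the translation invariance of the weight $t^{1-2s}$ in the $x$-directions) is exactly the one in the paper. The only genuine difference is how the passage from smooth functions to a general $w\in H^1(B_r^+,t^{1-2s})$ is handled: the paper applies the fundamental theorem of calculus directly to $w$ for a.e.\ $z$, invoking the absolute continuity of Sobolev functions on lines, which is legitimate here because of the explicit characterization \eqref{H11-2s-explicit} of the weighted space as a subset of $W^{1,1}_{\rm loc}(B_r^+)$; you instead prove the inequality for $\phi\in C^\infty(\overline{B_r^+})$ and pass to the limit via density, a.e.\ convergence of a subsequence (which does require noting, as you implicitly do, that the weight is positive a.e., so $L^2(B_r^+,t^{1-2s})$-convergence yields a.e.\ convergence along a subsequence), and Fatou's lemma on the left together with norm convergence on the right. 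Both routes are sound; yours is self-contained given only the definition of $H^1(B_r^+,t^{1-2s})$ as a completion with respect to \eqref{H11-2s-norm}, at the cost of the extra limiting step, while the paper's is shorter but leans on the ACL characterization of the weighted space.
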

\begin{proof}
  For a.e. $z\in B^+_{r,k,i}$, by the absolute continuity of Sobolev
  functions on lines,
	\[|w(z+ke_i)-w(z)|=\left|\int_0^1 \frac{d}{d\tau}w(z+\tau ke_i)d\tau\right| \le\int_0^1\left|\pd{w}{x_i}(z+\tau ke_i)\right| |k|\, d\tau. \]
	Multiplying by $t^{1-2s}$ and integrating on $B^+_{r,k}$  we obtain, by Cauchy-Schwartz's inequality and Fubini-Tonelli's Theorem,
	\begin{multline*}
		\int_{B^+_{r,k,i}} t^{1-2s}\frac{|w(z+ke_i)-w(z)|^2}{|k|^2}\,dz \\
		\le \int_{B^+_{r,k,i}} t^{1-2s} \left(\int_0^1\left|\pd{w}{x_i}(z+\tau k e_i)\right|^2 \, d\tau\right) \, dz \le \int_{B^+_{r}} t^{1-2s}\left|\pd{w}{x_i}\right|^2 \, dz
	\end{multline*}
	which proves \eqref{incremet-vs-derivative-frac}.
\end{proof}

We refer to \cite{MR3169789} for the following result,
  which can be deduced from \cite[Theorem
  19.7]{MR1069756}.
\begin{proposition}
	For any $r>0$ there exists a linear, continuous, compact trace operator   
	\begin{equation}\label{trace-Sr+}
		\mathop{\rm{Tr}_1}: H^1(B^+_{r},t^{1-2s})  \to L^2(S_r^+,t^{1-2s}).
	\end{equation}
\end{proposition}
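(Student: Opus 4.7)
The plan is to prove continuity by establishing the trace inequality on smooth functions via an integration-by-parts argument, and then to upgrade to compactness by reducing to a product structure in polar coordinates and combining two standard compactness ingredients.

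For continuity, since $C^{\infty}(\overline{B_r^+})$ is dense in $H^1(B_r^+,t^{1-2s})$ by the very definition of the space in \eqref{H11-2s-norm}, it suffices to obtain, for every $\phi \in C^{\infty}(\overline{B_r^+})$, an estimate of the form
\begin{equation*}
\int_{S_r^+} t^{1-2s}\phi^2\, dS \leq C \norm{\phi}_{H^1(B_r^+,t^{1-2s})}^2.
\end{equation*}
I would apply the divergence theorem to the vector field $F(z):=t^{1-2s}\phi^2 z$ on $B_r^+$. On the flat face $B_r'$ the outer normal is $(0,\ldots,0,-1)$, so $F\cdot \nu = -t^{2-2s}\phi^2$, which vanishes as $t\to 0^+$ since $s\in(0,1)$ implies $2-2s>0$; on $S_r^+$ the outer normal is $z/r$, so $F\cdot\nu = r\,t^{1-2s}\phi^2$. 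A direct computation yields $\dive F = (N+2-2s)\, t^{1-2s}\phi^2 + 2\, t^{1-2s}\phi\, z\cdot\nabla\phi$, and Cauchy--Schwarz together with $|z|\le r$ produces the desired inequality with an explicit constant depending only on $N$, $s$, and $r$. Extending by density to $H^1(B_r^+,t^{1-2s})$ then produces the continuous linear operator $\mathop{\rm{Tr}_1}$.

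For compactness, let $\{u_n\}$ be bounded in $H^1(B_r^+,t^{1-2s})$. I would pass to polar coordinates $z=\rho\omega$ with $\rho\in(0,r]$ and $\omega\in S_1^+$, so that setting $V_n(\rho,\omega):=u_n(\rho\omega)$ transforms the weighted norm into a product weighted Sobolev norm on $(0,r)\times S_1^+$ with weight $\rho^{N+1-2s}\omega_{N+1}^{1-2s}$; under this identification, the trace on $S_r^+$ corresponds to $V_n(r,\cdot)$ in $L^2(S_1^+,\omega_{N+1}^{1-2s}\,d\sigma)$. Compactness of $V_n\mapsto V_n(r,\cdot)$ then follows from an Aubin--Lions-type argument combining two ingredients: (i) tangential compactness, namely the Rellich-type embedding of the weighted space $H^1(S_1^+,\omega_{N+1}^{1-2s})$ into $L^2(S_1^+,\omega_{N+1}^{1-2s})$; and (ii) radial compactness, namely the compact embedding of the one-dimensional weighted Sobolev space on $(r/2,r)$ with weight $\rho^{N+1-2s}$ into $C([r/2,r])$, obtained from the absolute continuity on lines of one-dimensional Sobolev functions.

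The main obstacle is ingredient (i), because the weight $\omega_{N+1}^{1-2s}$ degenerates when $s<1/2$ or blows up when $s>1/2$ along the equator $\partial S_1^+=S_1'$. This is precisely where the Muckenhoupt $A_2$ property of $t^{1-2s}$ is essential, and it is the reason the proposition is stated as a direct consequence of the abstract trace theory for Muckenhoupt-weighted Sobolev spaces on Lipschitz domains developed in \cite[Theorem 19.7]{MR1069756}.
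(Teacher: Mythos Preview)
The paper does not actually prove this proposition: it is stated with the one-line justification ``We refer to \cite{MR3169789} for the following result, which can be deduced from \cite[Theorem 19.7]{MR1069756}.'' Your proposal is consistent with this, and in fact supplies strictly more detail than the paper. Your divergence-theorem argument for continuity is correct (with the usual caveat that for $s>\tfrac12$ one should integrate over $B_{r,\delta}^+$ and send $\delta\to0^+$, since the tangential components $t^{1-2s}\phi^2 x_i$ of $F$ are singular at $t=0$; the boundary contribution is still $-\delta^{2-2s}\int\phi^2\to0$, so nothing changes). For compactness you sketch a polar-coordinate Aubin--Lions reduction but ultimately, and correctly, identify that the essential ingredient---the weighted Rellich embedding on $S_1^+$ with the degenerate/singular weight $\omega_{N+1}^{1-2s}$---is precisely what \cite[Theorem 19.7]{MR1069756} provides, which is exactly the reference the paper invokes. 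So your approach and the paper's coincide at the level of the key input; you have simply made the continuity step explicit.
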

For the sake of simplicity we will always denote
$\mathop{\rm{Tr}_1}(w)$ with $w$
for any $w \in H^1(B^+_{r},t^{1-2s})$.
\begin{lemma} 
  \cite[Lemma 2.6]{MR3169789} There exists a constant
  $\mathcal{S}_{N,s}>0$ such that, for any $r>0$ and 
  $w \in H^1(B_r^+,t^{1-2s})$,
	\begin{equation}\label{ineq-Sobolev-trace}
		\left(\int_{B'_r} |w|^{2^*_s} \, dx\right)^{\frac{2}{2^*_s}} \le \mathcal{S}_{N,s}\left(\int_{B^+_r} t^{1-2s}|\nabla w|^2 \, dz+\frac{N-2s}{2r}\int_{S_r^+} t^{1-2s}w^2 \, dS \right),
	\end{equation}
	where $2^*_s=\frac{2N}{N-2s}$.
\end{lemma}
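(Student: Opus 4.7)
The plan is to prove the inequality via a Hardy-type identity combined with the standard trace-Sobolev embedding. The key observation is that both sides of \eqref{ineq-Sobolev-trace} are scale-invariant, so one can reduce to $r=1$; the factor $(N-2s)/(2r)$ then emerges naturally from a radial integration by parts.

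First, by the change of variables $\widetilde w(z) := w(rz)$, one checks directly that $\int_{B'_r}|w|^{2^*_s}\,dx$, $\int_{B_r^+}t^{1-2s}|\nabla w|^2\,dz$, and $\tfrac{1}{r}\int_{S_r^+}t^{1-2s}w^2\,dS$ all scale with the same power $r^{N-2s}$ of $r$ (upon raising the left-hand side to the power $2/2^*_s$); hence it suffices to treat the case $r=1$, and by density of $C^\infty(\overline{B_1^+})$ in $H^1(B_1^+,t^{1-2s})$ one may further assume $w$ smooth.

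Next, I would establish the Hardy-type identity by working in polar coordinates $z=\rho\omega$, $\rho\in(0,1]$, $\omega\in S^N_+$. Starting from
\[\int_0^1 \frac{d}{d\rho}\bigl(\rho^{N-2s} w^2(\rho\omega)\bigr)\,d\rho = w^2(\omega),\]
multiplying by $\omega_{N+1}^{1-2s}$ and integrating over $S^N_+$, and rewriting in Euclidean coordinates using $t^{1-2s}\,dz=\rho^{N+1-2s}\omega_{N+1}^{1-2s}\,d\rho\,d\omega$, one obtains
\[(N-2s)\int_{B_1^+}\frac{t^{1-2s}}{|z|^2}w^2\,dz + 2\int_{B_1^+}\frac{t^{1-2s}}{|z|}\,w\,\Bigl(\nabla w\cdot\frac{z}{|z|}\Bigr)\,dz = \int_{S_1^+} t^{1-2s}w^2\,dS.\]
Applying Cauchy--Schwarz to the cross term and Young's inequality with parameter $(N-2s)/2$ absorbs half of the Hardy integral into the left-hand side, yielding the Hardy inequality
\[\int_{B_1^+} t^{1-2s}w^2\,dz \le \int_{B_1^+}\frac{t^{1-2s}}{|z|^2}w^2\,dz \le C_{N,s}\Bigl(\int_{B_1^+} t^{1-2s}|\nabla w|^2\,dz + \frac{N-2s}{2}\int_{S_1^+} t^{1-2s}w^2\,dS\Bigr),\]
where I have used $|z|\le 1$ for the first inequality.

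Finally, I would invoke the continuous trace operator $\Tr:H^1(B_1^+,t^{1-2s})\to H^s(B_1')$ from \eqref{Tr} and the fractional Sobolev embedding $H^s(B_1')\hookrightarrow L^{2^*_s}(B_1')$ (valid since $N>2s$), which together give
\[\|\Tr(w)\|_{L^{2^*_s}(B_1')}^{2}\le C\bigl(\|w\|_{L^2(B_1^+,t^{1-2s})}^{2}+\|\nabla w\|_{L^2(B_1^+,t^{1-2s})}^{2}\bigr).\]
Plugging the Hardy bound into the $L^2(t^{1-2s})$ term yields the desired inequality for $r=1$, and the general case follows by undoing the scaling, which absorbs the $1/r$ into the boundary term while leaving the constant $\mathcal{S}_{N,s}$ independent of $r$.

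The main obstacle will be justifying the Hardy identity rigorously for $w\in H^1(B_1^+,t^{1-2s})$: the pointwise derivation along rays is natural for smooth $w$, but extending by density requires checking that both sides pass to the limit in the appropriate weighted Lebesgue spaces, using the explicit characterization \eqref{H11-2s-explicit} and the continuous trace \eqref{trace-Sr+} onto $S_1^+$. A secondary technicality is verifying scale-invariance with the precise factor $(N-2s)/(2r)$, which must match the coefficient produced by Young's inequality in step two.
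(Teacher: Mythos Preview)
The paper does not supply its own proof of this lemma; it simply cites \cite[Lemma~2.6]{MR3169789}. Your argument is a correct, self-contained proof: the scaling reduction to $r=1$ is valid (all three quantities scale like $r^{N-2s}$), the radial Hardy identity and the ensuing Hardy inequality are derived correctly, and combining the bounded-domain trace \eqref{Tr} with the fractional Sobolev embedding $H^s(B_1')\hookrightarrow L^{2^*_s}(B_1')$ (valid since $N>2s$) closes the estimate.

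One minor point: your final worry about matching the \emph{exact} coefficient $\tfrac{N-2s}{2}$ via Young's inequality is unnecessary. Since $\mathcal{S}_{N,s}$ is a free constant depending only on $N$ and $s$, any inequality of the form
\[
\|\Tr(w)\|_{L^{2^*_s}(B_1')}^2 \le c_1\int_{B_1^+}t^{1-2s}|\nabla w|^2\,dz + c_2\int_{S_1^+}t^{1-2s}w^2\,dS
\]
with $c_1,c_2>0$ depending only on $N,s$ can be rewritten in the required form by taking $\mathcal{S}_{N,s}=\max\{c_1,\,2c_2/(N-2s)\}$.

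For comparison, the proof in the cited reference \cite{MR3169789} takes a different route: it extends $w$ from $B_r^+$ to all of $\R^{N+1}_+$ by a Kelvin-type reflection across $S_r^+$ and then invokes the \emph{global} sharp trace inequality on $\R^{N+1}_+$; the coefficient $\tfrac{N-2s}{2r}$ then appears exactly when one computes the weighted Dirichlet energy of the reflected part. That approach delivers the sharp constant, whereas yours yields a non-sharp but perfectly adequate constant via a more elementary Hardy-plus-local-trace argument.
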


\begin{remark}\label{equation-well-posed}
By \eqref{ineq-Sobolev-trace} and H\"older's inequality,  the definition  of weak solution given in  \eqref{eq-regularity} is well posed.
\end{remark}

\begin{lemma}
  Let $\Tr$ be the trace operator introduced in \eqref{Tr}.
\begin{enumerate}[\rm (i)]
\item  For any
  $r>0$, $f \in C^{0,1}(\overline{B_r^+})$ and
  $w\in H^1(B_r^+,t^{1-2s})$,
\begin{equation}\label{eq-trace-product}
\Tr(f w)=f(\cdot,0)\Tr(w).
\end{equation}
\item  For any
  $r>0$, $u\in H^1(B_r^+,t^{1-2s})$ and
  $v\in H^1(B_r^+,t^{2s-1})$, we have that  $uv\in W^{1,1}(B_r^+)$ and
\begin{equation}\label{eq-trace-product-2}
\Tr(uv)=\Tr (u)\Tr(v).
\end{equation}
\end{enumerate}
\end{lemma}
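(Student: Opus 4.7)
The plan is to reduce both identities to smooth approximations, where traces coincide with classical restrictions to $\{t=0\}$, and then pass to the limit using the continuity of the various trace maps. For (i), from the explicit characterization \eqref{H11-2s-explicit}, the Leibniz rule, and the bounds $f,\nabla f \in L^\infty(B_r^+)$, one first checks that $fw \in H^1(B_r^+,t^{1-2s})$, so that $\Tr(fw)$ is well defined via \eqref{Tr}. I pick $w_n \in C^\infty(\overline{B_r^+})$ with $w_n \to w$ in $H^1(B_r^+, t^{1-2s})$. The product $fw_n$ is Lipschitz on $\overline{B_r^+}$, hence its $W^{1,1}$-trace (which restricts to the $H^1(t^{1-2s})$-trace) coincides with the pointwise boundary restriction $f(\cdot,0)w_n(\cdot,0) = f(\cdot,0)\Tr(w_n)$. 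Combining the Leibniz estimate $\norm{f(w_n - w)}_{H^1(B_r^+, t^{1-2s})} \le C\norm{w_n - w}_{H^1(B_r^+, t^{1-2s})} \to 0$ with the continuity of $\Tr$ and the boundedness of $f(\cdot,0)$ on $B_r'$, passing to the limit in $\Tr(fw_n) = f(\cdot,0)\Tr(w_n)$ yields \eqref{eq-trace-product}.

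For (ii), I first show $uv \in W^{1,1}(B_r^+)$ by splitting the trivial weight as $1 = t^{(1-2s)/2}\cdot t^{(2s-1)/2}$ and applying Cauchy--Schwarz, which gives
\begin{equation*}
\int_{B_r^+}\bigl(|uv| + |u\,\nabla v| + |v\,\nabla u|\bigr)\,dz \le C\,\norm{u}_{H^1(B_r^+,t^{1-2s})}\,\norm{v}_{H^1(B_r^+,t^{2s-1})},
\end{equation*}
together with the distributional Leibniz rule $\nabla(uv)=u\nabla v+v\nabla u$ (justified a posteriori by the smooth approximation below). I then approximate $u,v$ by smooth $u_n,v_n$ in $H^1(B_r^+,t^{1-2s})$ and $H^1(B_r^+,t^{2s-1})$ respectively. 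The same Cauchy--Schwarz splitting shows that $u_n v_n \to uv$ in $W^{1,1}(B_r^+)$, so by continuity of the $W^{1,1}$-trace $\Tr(u_n v_n) \to \Tr(uv)$ in $L^1(B_r')$. For smooth functions $\Tr(u_n v_n) = \Tr(u_n)\Tr(v_n)$ pointwise on $B_r'$. Since the trace maps $H^1(B_r^+,t^{1-2s}) \to H^s(B_r')$ and (by the role symmetry $s\leftrightarrow 1-s$) $H^1(B_r^+,t^{2s-1}) \to H^{1-s}(B_r')$ are continuous and embed into $L^2(B_r')$, one has $\Tr(u_n) \to \Tr(u)$ and $\Tr(v_n) \to \Tr(v)$ in $L^2(B_r')$; Cauchy--Schwarz then yields $\Tr(u_n)\Tr(v_n) \to \Tr(u)\Tr(v)$ in $L^1(B_r')$, whence \eqref{eq-trace-product-2}.

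The main obstacle I anticipate is the careful handling of the Leibniz rule in the weighted setting and the identification of the various trace notions (weighted $H^1$ versus $W^{1,1}$) on the Lipschitz and smooth approximants; once these are in place, everything else reduces to Cauchy--Schwarz in the split weight and the continuity of the trace operators.
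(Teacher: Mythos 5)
Your proposal is correct and follows essentially the same route as the paper: smooth approximation in the weighted spaces, the Leibniz/Cauchy--Schwarz estimate with the split weight $t^{(1-2s)/2}t^{(2s-1)/2}$ to get $W^{1,1}$-convergence of the products, and identification of the limits via continuity of the weighted and $W^{1,1}$ trace operators. Your explicit remarks on the Lipschitz product in (i) and on the dual-weight trace $H^1(B_r^+,t^{2s-1})\to H^{1-s}(B_r')$ in (ii) only make precise what the paper leaves implicit.
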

\begin{proof}
Let us first prove (i). If $w \in C^{\infty}(\overline{B_r^+})$ then \eqref{eq-trace-product}
is trivial; if  $w\in H^1(B_r^+,t^{1-2s})$ there exists  $\{\phi_n\}_{n \in \mathbb{N}}\subset 
C^{\infty}(\overline{B_r^+})$ such that $\phi_n\to w$ in $H^1(B_r^+,t^{1-2s})$ as $n \to \infty$.
Furthermore, for any  $f \in C^{0,1}(\overline{B_r^+})$, it is easy to see that $\phi_nf\to w f$ in $H^1(B_r^+,t^{1-2s})$ as $n \to \infty$ .
Then \eqref{eq-trace-product} follows from the continuity of the
operator $\Tr$.

  We now prove (ii). If $u\in H^1(B_r^+,t^{1-2s})$ and
  $v\in H^1(B_r^+,t^{2s-1})$, the fact that $uv\in W^{1,1}(B_r^+)$
  follows easily from H\"older's inequality. Moreover there exist  $\{u_n\}_{n \in \mathbb{N}}\subset 
C^{\infty}(\overline{B_r^+})$ such that $u_n\to u$ in
$H^1(B_r^+,t^{1-2s})$ and
$\{v_n\}_{n \in \mathbb{N}}\subset 
C^{\infty}(\overline{B_r^+})$ such that $v_n\to v$ in
$H^1(B_r^+,t^{2s-1})$. One can easily verify that  $u_nv_n\to uv$ in
$W^{1,1}(B_r^+)$, so that $\Tr(u_nv_n)\to \Tr(uv)$ in $L^1(B_r')$. On
the other hand, since by  continuity of the operator \eqref{Tr}
$\Tr(u_n)\to \Tr(u)$ and $\Tr(v_n) \to\Tr(v)$ in $L^2(B_r')$, we have
also that $\Tr(u_nv_n)=\Tr(u_n)\Tr(v_n)\to \Tr(u)\Tr(v)$ in
$L^1(B_r')$, so that necessarily $\Tr(uv)=\Tr(u)\Tr(v)$.
\end{proof}
For any $r >0$, let
\begin{equation}\label{W1+s2}
H^{1+s}(B_r'):=\left\{w \in H^1(B_r'):\pd{w}{x_i}\in H^s(B_r') \text{ for any } i=1,\dots, N\right\},
\end{equation}
see \cite{MR2944369} for details on this class of fractional Sobolev spaces.
We also consider the  space
\begin{equation}\label{eq:H2x}
H^{2}_x(B_r^+,t^{1-2s}):=\left\{w \in  H^1(B_r^+,t^{1-2s}):\pd{w}{x_i}\in H^1(B_r^+,t^{1-2s}) \text{ for any } i=1,\dots, N\right\}.
\end{equation}

\begin{proposition}
Let $\Tr$  be the trace operator  introduced in \eqref{Tr}. For any $r>0$ 
\begin{equation}\label{Tr-restiction-H2}
\Tr(H^2_x(B_r^+,t^{1-2s}))\subseteq H^{1+s}(B_r').
\end{equation}
Furthermore, for any $w \in H^2_x (B_r^+,t^{1-2s})$,
\begin{equation}\label{Tr-graient-formula}
\Tr(\nabla_x w)=\nabla \Tr(w),
\end{equation}
where $\nabla_x=\left(\frac{\partial}{\partial x_1},  \frac{\partial}{\partial x_1},\dots, \frac{\partial}{\partial    x_N}\right)$ denotes the gradient with respect to the first $N$ variables.	
\end{proposition}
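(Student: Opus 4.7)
The plan is to apply a Nirenberg-type difference quotient argument in the variables $x_1,\dots,x_N$ tangential to the hyperplane $\{t=0\}$ in order to transfer the $H^2_x$-regularity of $w$ to the trace $\Tr(w)$. More precisely, we aim to prove that for every $i=1,\dots,N$ the distributional identity $\pd{\Tr(w)}{x_i}=\Tr\bigl(\pd{w}{x_i}\bigr)$ holds on $B'_r$; since $\pd{w}{x_i}\in H^1(B_r^+,t^{1-2s})$ by the definition \eqref{eq:H2x}, the continuity of the trace operator \eqref{Tr} immediately yields $\Tr(\pd{w}{x_i})\in H^s(B'_r)$, and together these two facts give \eqref{Tr-restiction-H2} and \eqref{Tr-graient-formula}.

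Fix $i\in\{1,\dots,N\}$ and, for small $k\in\R$, consider the difference quotient $D^k_i w(z):=k^{-1}(w(z+ke_i)-w(z))$, which is well-defined on $B^+_{r,k,i}$. The first (and main) step is to show that $D^k_i w\to\pd{w}{x_i}$ in $H^1(B^+_{r'},t^{1-2s})$ for every $r'<r$ as $k\to 0$. Writing $D^k_i w(z)=\int_0^1 \pd{w}{x_i}(z+\tau ke_i)\,d\tau$ and invoking the continuity of translations in $L^2(B_r^+,t^{1-2s})$ along the $x_i$-direction (which holds because the weight $t^{1-2s}$ is independent of $x$) gives the $L^2(t^{1-2s})$-convergence. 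For the gradient, one applies the same argument to $\nabla D^k_i w=D^k_i(\nabla w)$; this requires that all components of $\nabla\pd{w}{x_i}$ lie in $L^2(B_r^+,t^{1-2s})$, which for horizontal components follows from $\pd{w}{x_j}\in H^1(B_r^+,t^{1-2s})$ (for $j=1,\dots,N$) together with the equality of weak mixed partial derivatives, and for the vertical component follows directly from $\pd{w}{x_i}\in H^1(B_r^+,t^{1-2s})$.

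With this convergence in hand, the continuity of the trace operator \eqref{Tr} gives $\Tr(D^k_i w)\to\Tr(\pd{w}{x_i})$ in $H^s(B'_{r'})$ as $k\to 0$, for every $r'<r$. On the other hand, translation in $x_i$ commutes with the trace operator, so $\Tr(D^k_i w)=D^k_i\Tr(w)$ on $B'_{r,k,i}$; moreover, testing the difference quotient against any $\varphi\in C^\infty_c(B'_r)$ (whose support lies in some $B'_{r'}$ with $r'<r$) and integrating by parts shows that $D^k_i\Tr(w)\to\pd{\Tr(w)}{x_i}$ in $\mathcal{D}'(B'_r)$. Uniqueness of distributional limits then forces
\[
\pd{\Tr(w)}{x_i}=\Tr\!\left(\pd{w}{x_i}\right)\quad\text{in }\mathcal{D}'(B'_r),
\]
and since the right-hand side belongs to $H^s(B'_r)$ by the trace inequality applied to $\pd{w}{x_i}\in H^1(B_r^+,t^{1-2s})$, we conclude $\pd{\Tr(w)}{x_i}\in H^s(B'_r)$ for every $i=1,\dots,N$.

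To finish, combining $\Tr(w)\in H^s(B'_r)\subset L^2(B'_r)$ with $\pd{\Tr(w)}{x_i}\in H^s(B'_r)\subset L^2(B'_r)$ yields $\Tr(w)\in H^1(B'_r)$, and then the definition \eqref{W1+s2} gives $\Tr(w)\in H^{1+s}(B'_r)$, thus establishing \eqref{Tr-restiction-H2}; identity \eqref{Tr-graient-formula} is precisely the distributional formula derived above. The main technical obstacle is the $H^1(B^+_{r'},t^{1-2s})$-convergence of $D^k_i w$ to $\pd{w}{x_i}$ described in the second paragraph, as it requires handling translations and weak mixed partial derivatives in the weighted space; both of these become standard once one exploits the fact that $t^{1-2s}$ depends only on $t$.
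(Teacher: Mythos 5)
Your argument is correct, and it reaches \eqref{Tr-graient-formula} by a genuinely different route than the paper. The paper's proof extends $w$ by even reflection across $\{t=0\}$, localizes with a cut-off, and mollifies, invoking \cite[Lemma 1.5]{kilpelainen} to guarantee that convolution converges in $H^1(\R^{N+1},|t|^{1-2s})$ both for $\tilde w$ and for its tangential derivatives; the identity $\partial_{x_i}\Tr(w)=\Tr(\partial_{x_i}w)$ is then obtained by passing to the limit in the integration by parts on the hyperplane for the smooth approximants. You instead work with tangential difference quotients: the convergence $D^k_i w\to\partial_{x_i}w$ in $H^1(B^+_{r'},t^{1-2s})$ (which rests on continuity of $x$-translations in the weighted space — unproblematic since the weight depends only on $t$ — and on commutation of mixed distributional derivatives to handle $\partial_t D^k_i w$), combined with the continuity of the trace operator \eqref{Tr} and the elementary facts that translation commutes with the classical trace and that $D^k_i\Tr(w)\to\partial_{x_i}\Tr(w)$ in $\mathcal D'(B_r')$, forces the two limits to coincide. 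Your approach avoids reflection and the weighted mollification lemma altogether, at the cost of the (easy but worth stating carefully) domain bookkeeping for translated functions and the verification that $\Tr(\tau_{i,k}u)=\tau_{i,k}\Tr(u)$; both proofs then conclude identically, since $\Tr(\partial_{x_i}w)\in H^s(B_r')$ follows from \eqref{Tr} applied to $\partial_{x_i}w\in H^1(B_r^+,t^{1-2s})$.
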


\begin{proof}
Let $w \in H^2_{x}(B_r^+,t^{1-2s})$. Let us fix $\phi \in
  C^{\infty}_c(B_r')$; then there exists $\tilde \phi\in
  C^{\infty}_c(B_r^+\cup B_r')$ such that $\tilde \phi(x,0)=\phi(x)$
  for all $x\in B_r'$. Let $\eta\in C^\infty_c(B_r)$ be a smooth cut-off function such
  that $\eta\equiv1$ on $\mathop{\rm supp}\tilde\phi$.
Then, denoting as $\hat w$ the even reflection of $w$ through the
hyperplane $t=0$, $\tilde w=\eta \hat w\in
H^1(\R^{N+1},|t|^{1-2s})$ and
$\frac{\partial \tilde w}{\partial x_i}\in H^1(\R^{N+1},|t|^{1-2s})$  for all $i\in\{1,\dots,N\}$.
Then, letting $\{\rho_n\}$ be a sequence of
mollifiers and $w_n=\rho_n*\tilde w$, from \cite[Lemma
1.5]{kilpelainen} it follows that $w_n\in C^\infty(\R^{N+1})$ and, for all $i\in\{1,\dots,N\}$,
\[
  w_n\to\tilde w\quad\text{and}\quad\frac{\partial w_n}{\partial x_i}=\rho_n*\frac{\partial \tilde w}{\partial x_i}\to \frac{\partial \tilde w}{\partial x_i}\quad\text{in $H^1(\R^{N+1},|t|^{1-2s})$}.
\]
Then, for
  any $i =1, \dots, N$,
\begin{align*}
 \int_{B_r'}\Tr(w)& \pd{\phi}{x_i} \, dx
 =\int_{B_r'}\Tr(\tilde w) \pd{\phi}{x_i} \, dx =\lim_{n \to\infty}\int_{B_r'}w_n(x,0)\pd{\phi}{x_i}(x,0)  \, dx \\
&=-\lim_{n \to\infty}\int_{B_r'}\pd{w_n}{x_i}(x,0) \phi(x,0) \, dx=-\int_{B_r'}\Tr\left(\pd{\tilde w}{x_i}\right) \phi \, dx =-\int_{B_r'}\Tr\left(\pd{w}{x_i}\right) \phi \, dx,
\end{align*}
so that the distributional derivative in $B_r'$ of $\Tr(w)$ with respect to $x_i$ is
$\Tr\big(\pd{w}{y_i}\big)$ which belongs to $H^s(B_r')$. Therefore
we have proved \eqref{Tr-graient-formula},  which directly
  implies \eqref{Tr-restiction-H2} in view of \eqref{Tr}.
\end{proof}

\begin{proposition}
  Let $U$ be a solution of \eqref{eq-regularity}. For a.e.
  $r \in (0,R)$ and $\phi \in H^1(B_r^+,t^{1-2s})$
\begin{equation}\label{eq-div}
\int_{B_r^+ }\! t^{1-2s} [A \nabla U \!\cdot \!\nabla \phi +c \phi ]dz 
=\frac{1}{r}\int_{S_r^+} t^{1-2s} A \nabla U \!\cdot \! z \, \phi \, dS + \int_{B_r'} [h \Tr(U)+g] \Tr(\phi) \, dx.
\end{equation}
\end{proposition}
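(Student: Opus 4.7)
The plan is to derive \eqref{eq-div} by testing the weak formulation \eqref{eq-regularity} against $\eta_\delta\tilde\phi$, where $\tilde\phi$ is an extension of $\phi$ to $B_R^+$ and $\eta_\delta$ is a radial Lipschitz cutoff that localizes to $B_r^+$, and then letting $\delta\to 0^+$.

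First I would extend $\phi\in H^1(B_r^+,t^{1-2s})$ to some $\tilde\phi\in H^1(B_R^+,t^{1-2s})$; such an extension exists because $t^{1-2s}$ is an $A_2$-Muckenhoupt weight and $B_r^+$ is a Lipschitz domain (alternatively, one approximates $\phi$ by $C^\infty(\overline{B_r^+})$-functions, which extend trivially, and passes to the limit at the end). For $\delta>0$ with $r+\delta<R$, I define
\[
\eta_\delta(z):=\min\big\{1,\,\delta^{-1}(r+\delta-|z|)_+\big\},
\]
so that $\eta_\delta\equiv1$ on $\overline{B_r^+}$, $\eta_\delta\equiv0$ outside $B_{r+\delta}^+$, and $\nabla\eta_\delta=-\delta^{-1}z/|z|$ on the shell $B_{r+\delta}^+\setminus B_r^+$. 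Then $\eta_\delta\tilde\phi$ belongs to $H^1_{0,S_R^+}(B_R^+,t^{1-2s})$ and is admissible in \eqref{eq-regularity}.

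Plugging $\eta_\delta\tilde\phi$ into \eqref{eq-regularity} and expanding $\nabla(\eta_\delta\tilde\phi)=\eta_\delta\nabla\tilde\phi+\tilde\phi\nabla\eta_\delta$, the two bulk terms and the boundary term over $B_R'$ converge, by dominated convergence as $\delta\to 0^+$, to the corresponding integrals over $B_r^+$ and $B_r'$ appearing in \eqref{eq-div}; the dominating functions lie in $L^1$ by Cauchy--Schwarz in $L^2(B_R^+,t^{1-2s})$ and by \eqref{ineq-Sobolev-trace} together with \eqref{g-h-hypo}. The shell term rewrites, via the coarea formula, as
\[
\int_{B_R^+}t^{1-2s}\tilde\phi\,A\nabla U\cdot\nabla\eta_\delta\,dz=-\frac{1}{\delta}\int_r^{r+\delta}\frac{1}{\rho}\!\left(\int_{S_\rho^+} t^{1-2s}\tilde\phi\,A\nabla U\cdot z\,dS\right)d\rho,
\]
and the map $F(\rho):=\rho^{-1}\int_{S_\rho^+}t^{1-2s}\tilde\phi\,A\nabla U\cdot z\,dS$ lies in $L^1(0,R)$ since $\int_0^R|F|\,d\rho\le \int_{B_R^+}t^{1-2s}|\tilde\phi|\,|A\nabla U|\,dz<\infty$ by Cauchy--Schwarz (using $|z|=\rho$ on $S_\rho^+$). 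Lebesgue's differentiation theorem therefore yields, at a.e. $r\in(0,R)$, the limit $-r^{-1}\int_{S_r^+}t^{1-2s}\phi\,A\nabla U\cdot z\,dS$, and rearranging gives \eqref{eq-div}.

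The delicate point is that the null set of exceptional $r$ produced above depends a priori on $\tilde\phi$, whereas the proposition asks for one null set valid for every $\phi\in H^1(B_r^+,t^{1-2s})$. I would resolve this by a standard separability argument: fix a countable dense sequence $\{\psi_n\}\subset H^1(B_R^+,t^{1-2s})$, perform the construction for each $\psi_n$, and take the union of the resulting null sets (together with the null set of $r$ for which $A\nabla U\cdot z\notin L^2(S_r^+,t^{1-2s})$, again controlled by coarea) to obtain a single exceptional set $\mathcal{N}\subset(0,R)$ of measure zero outside of which \eqref{eq-div} holds for every $\psi_n$. For $r\notin\mathcal{N}$ and arbitrary $\phi\in H^1(B_r^+,t^{1-2s})$, extend $\phi$ and approximate the extension by a subsequence of $\{\psi_n\}$; continuity of $\Tr$ from \eqref{Tr} and of $\mathrm{Tr}_1$ into $L^2(S_r^+,t^{1-2s})$ from \eqref{trace-Sr+} allow all terms to pass to the limit, producing \eqref{eq-div} for $\phi$. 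This bookkeeping---rather than the integration by parts itself---is the main technical obstacle.
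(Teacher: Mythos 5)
Your proof is correct and follows essentially the same route as the paper's: there, one tests \eqref{eq-regularity} with $\phi\eta_n$, where $\eta_n$ is the radial cutoff \eqref{eta-n} whose transition shell lies just \emph{inside} $S_r^+$, and passes to the limit via dominated convergence, \eqref{eq-trace-product}, and the coarea/Lebesgue-point observation of Remark \ref{coarea-well-defined} --- exactly your shell computation, except that placing the shell inside $B_r^+$ and first reducing by density to $\phi\in C^\infty(\overline{B_r^+})$ removes any need to extend $\phi$ past $S_r^+$. The one point where you go beyond the paper is the closing separability argument: the paper's proof, as written, yields an exceptional null set depending on $\phi$ through the Lebesgue points of $\rho\mapsto\rho^{-1}\int_{S_\rho^+}t^{1-2s}A\nabla U\cdot z\,\phi\,dS$, so your countable-dense-family argument (together with discarding the null set of $r$ for which $\nabla U\notin L^2(S_r^+,t^{1-2s})$, controlled by coarea, and using the trace operators \eqref{Tr} and \eqref{trace-Sr+} to pass to the limit in $\phi$) is a legitimate tightening that actually matches the quantifier order in the statement.
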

\begin{remark}\label{coarea-well-defined}
By Coarea Formula
\begin{equation}\label{eq-coarea-formula-Sr+}
\int_{B_{R}^+ } \Big|t^{1-2s}A \nabla U \cdot \frac{z}{|z|}\phi\Big| \, dz
=\int_0^{R}\left(\int_{S_r^+}\Big|  t^{1-2s} A \nabla U \cdot \frac{z}{r}  \,\phi\Big|  \, dS \right) \, dr.
\end{equation}
It follows that  the function
$f(r):=\int_{S_r^+} t^{1-2s}A \nabla U \cdot \frac{z}{r}
\phi \, dS$
is well-defined as an element of  $L^1(0,R)$ 
and hence a.e. $r \in (0,R)$ is a Lebesgue point of $f$.
\end{remark}
\begin{proof}
By density it is enough to prove \eqref{eq-div} for any $\phi \in C^\infty (\overline{B_r^+})$.
Let us consider the following sequence of radial cut-off functions
\begin{equation}\label{eta-n}
\eta_n(|z|):= 
\begin{cases}
1, &\text{ if } \quad 0 \le |z| \le r-\frac{1}{n}, \\
n(r-|z|),   &\text{ if } \quad  r-\frac{1}{n}\le |z| \le  r,\\
0, &\text{ if }    \quad |z| \ge r.
\end{cases}
\end{equation}
Testing \eqref{eq-regularity} with $\phi\eta_n$ and passing
to the limit as $n \to \infty$ we obtain \eqref{eq-div} thanks to
the Dominated  Convergence Theorem, \eqref{eq-trace-product} and
Remark \ref{coarea-well-defined}.
\end{proof}

\section{Regularity of weak solutions: proof of Theorem \ref{H2-regularity}}

For any $r >0$ and  $\delta \in (0,r)$, we define 
\begin{equation}\label{Br+-delta}
  B^+_{r,\delta}:=\{(x,t) \in B_r^+: t > \delta \}, \quad
  S^+_{r,\delta}:=\{(x,t) \in S_r^+: t > \delta \}.
\end{equation}

We are now ready to prove  Theorem \ref{H2-regularity}.
\begin{proof}[\bf{Proof of Theorem \ref{H2-regularity}}]
For any $r>0$ we denote  
\begin{equation*}
	C_{r}:=B'_{r} \times (0,r).	
\end{equation*}
Let us fix  $0<r_3<r_2<r_1<R$ with $r_2$ small enough so that    $\overline{C}_{r_2} \subset  B^+_{r_1}\cup B'_{r_1}$.
We will show that $U\in H^2(B^+_{r_3},t^{1-2s})$, eventually choosing a smaller $r_1$.

We start by defining a suitable cut-off function $\eta \in C_c^{\infty}(B'_{r_2} \times [0,r_2))$.
We choose a cut-off function $\rho \in C_c^{\infty}(B'_{r_2})$  such
that  $ 0\le \rho(x) \le 1$  for any $x \in \R^N$ and  $\rho(x) \equiv
1$ on $B'_{r_3}$ and a function $\sigma \in C_c^{\infty}([0,r_2))$
such that  $0\le \sigma(t) \le 1$  for any $t\in\R$ and  $\sigma(t)=1$ if
$t\in [0,r_3]$. Then we define
\begin{equation}\label{eta}
\eta(z)=\eta(x,t):=\rho(x)\sigma(t).
\end{equation}
Then $\eta \in C_c^{\infty}(B'_{r_2} \times [0,r_2))$ and $ 0\le \eta \le 1$. 
For any $\phi \in H^1(B_{r_1}^+,t^{1-2s})$ we can test \eqref{eq-regularity} with $\eta \phi $ obtaining 
\begin{multline}\label{eq:1}
\int_{B_{r_1}^+}[t^{1-2s} \eta	A \nabla U \cdot \nabla \phi +t^{1-2s} 	A \nabla U \cdot \nabla \eta \, \phi] \, dz +\int_{B_{r_1}^+}t^{1-2s} c\eta \phi \, dz\\
=\int_{B_{r_1}'}  [h\Tr(U)+g]\rho \Tr(\phi)\, dx,
\end{multline}
thanks to \eqref{eq-trace-product} and \eqref{eta}. We would like to rewrite \eqref{eq:1} as an equation for $U_1:=\eta U$. To this end we observe that 
\begin{equation}\label{eq:2}
\dive(t^{1-2s} U \phi \, 	A \nabla  \eta )= U\phi \,
\dive(t^{1-2s}A \nabla  \eta) +t^{1-2s} \phi \, A \nabla \eta
\cdot\nabla U  +t^{1-2s}U\, A \nabla\eta \cdot\nabla
\phi \in L^1(B_{r_1}^+).
\end{equation}
Letting $B^+_{r_1,\delta}$ be as in \eqref{Br+-delta},
the Divergence Theorem yields 
\begin{equation*}
	\int_{B_{r_1,\delta}^+}\dive(t^{1-2s} U \phi \, 	A \nabla  \eta ) dz =- \delta^{1-2s}\int_{B_{r_1}'}U(x,\delta) \phi(x,\delta) \alpha(x,\delta) \pd{\eta}{t}(x,\delta) \, dx, 
\end{equation*}
where $\alpha$ has been defined in \eqref{A-form}.
Since $\pd{\eta}{t}(x,\delta)=0$ for any $(x,\delta ) \in \R^N \times [0,r_3]$, passing to the limit as $\delta \to 0^+$ we conclude that 
\begin{equation}\label{eq:5}
\int_{B^+_{r_1}}\dive(t^{1-2s} U \phi \, 	A \nabla  \eta ) dz	=0, 
\end{equation}
thanks to the Dominated  Convergence Theorem and the fact that
$\dive(t^{1-2s} U \phi \, 	A \nabla  \eta )\in L^1(B_{r_1}^+)$ by
\eqref{eq:2}. 
Furthermore 
\begin{equation}\label{eq:6}
\, \dive(t^{1-2s}A \nabla \eta)=t^{1-2s}\left[ \dive (A \nabla \eta)+\frac{(1-2s)}{t}\alpha \pd{\eta}{t}\right],
\end{equation}
and  so, thanks to \eqref{eta} and  \eqref{b-c-hypo},
\begin{equation}\label{eq:7}
f:=U\dive (A \nabla \eta)+U\frac{(1-2s)}{t}\alpha \pd{\eta}{t} +2A \nabla U \cdot \nabla \eta +\eta c\in L^2(B_{r_1}^+,t^{1-2s}) .	
\end{equation}
In conclusion, combining \eqref{eq:2}, \eqref{eq:5}, and \eqref{eq:6} we can rewrite 
 \eqref{eq:1} as 
\begin{equation}\label{eq:8}
	\int_{B_{r_1}^+}t^{1-2s} A \nabla U_1 \cdot \nabla \phi \, dz+\int_{B_{r_1}^+}t^{1-2s} f \phi \, dz
	=\int_{B_{r_1}'} [h\Tr(U_1)+\rho g]\Tr(\phi)\, dx
      \end{equation}
for any $\phi \in H^1(B^+_{r_1},t^{1-2s})$, in view of
\eqref{eq-trace-product} and \eqref{eq:7}.
	
If we show that
$\nabla_x U_1 \in H^1(C_{r_2},t^{1-2s})$ and $t^{1-2s}\frac{\partial
  U_1}{\partial t}\in H^1(C_{r_2},t^{2s-1})$, then, since
$\eta\equiv 1$ on $C_{r_3}$, we obtain that
$\nabla_x U \in H^1(B^+_{r_3},t^{1-2s})$ and $t^{1-2s}\frac{\partial
  U}{\partial t}\in H^1(B^+_{r_3},t^{2s-1})$. To this end we use Nirenberg's
tangential difference quotient method \cite{nirenberg}, proving that the
family of the second incremental ratios is
$L^2$-bounded; see also \cite{grisvard} for the difference quotient
method for classical elliptic equations.

For any $i=1,\dots, N$ and  $k \in \R\setminus \{0\}$ and for
any  measurable function $w$ on $\R_+^{N+1}$, we define 
\begin{equation}\label{incremental-ratio}
	(\tau_{i,k}w)(x,t)=w(x+ke_i,t) \quad \text{and}\quad(\zeta_{i,k}w)(x,t)=
	\frac{(\tau_{i,k}w)(x,t)-w (x,t)}{k} .	
\end{equation}
If $\overline w=(w_1,\dots,w_{N+1})$ is a vector of measurable
functions we set
$\tau_{i,k}(\overline w):=(\tau_{i,k}w_1,\dots,\tau_{i,k}w_{N+1})$. We
can define $\tau_{i,k}$ similarly for a matrix of measurable functions.
The definition of $\zeta_{i,k}$ can be extended in a similar way.

It is easy to see that
$\tau_{i,k}:L^2(\R^{N+1}_+,t^{1-2s}) \to L^2(\R^{N+1}_+,t^{1-2s})$ is
a well-defined, continuous, linear operator, and
the adjoint operator of $\tau_{i,k}$ with respect to the
$L^2(\R_+^{N+1},t^{1-2s})$-scalar product is $\tau_{i,-k}$.

Furthermore $\tau_{i,k}:H^1(\R^{N+1}_+,t^{1-2s}) \to H^1(\R^{N+1}_+,t^{1-2s})$ is a 
well-defined, continuous, linear operator and, for any $i=1,\dots, N$ and any $w \in H^1(B_r^+,t^{1-2s})$,
\begin{equation*}
	\pd{\tau_{i,k}(w)}{x_i}=\tau_{i,k}\left(\pd{w}{x_i}\right).
\end{equation*}
With a slight abuse of notation, for any $i=1,\dots, N$
  and  $k \in \R \setminus \{0\}$
  we denote as $\tau_{i,k}$,
  respectively $\zeta_{i,k}$,
  also the operator $\tau_{i,k}v(x)=v(x+ke_i)$, respectively
  $\zeta_{i,k}v=\frac1k(\tau_{i,k}v-v)$,
  acting on measurable
  functions $v:\R^N\to\R$ and observe that  $\tau_{i,k},\zeta_{i,k}:W^{1,p}(\R^N)
  \to W^{1,p}(\R^N)$ are linear and continuous  for any $p \in
  [1,\infty)$; furthermore, the adjoint operator of $\tau_{i,k}$ is
  $\tau_{i,-k}$.

It is easy to see that, for all measurable functions $v,w$, 
\begin{equation}\label{increment-product}
	\zeta_{i,k}(vw)=\zeta_{i,k}(v)\tau_{i,k}w+v\zeta_{i,k}(w) 	
\end{equation}
and 
\[
  \frac{w(x+ke_i,t)-2w(x,t)+w(x-ke_i,t)}{k^2}=(\zeta_{i,k}\circ
  \zeta_{i,-k})(w)(x,t).
\]
We note that $U_1\equiv 0$ on $B_{r_1}^+\setminus
  C_{r_2}$, so that its trivial extension to $\R^{N+1}_+$ belongs to
  $H^1(\R^{N+1}_+,t^{1-2s})$;   with a slight abuse of notation we
  will still indicate this extension with $U_1$.

Let $|k| <\sqrt{r_1^2-r_2^2}-r_2$ (we note that $\sqrt{r_1^2-r_2^2}-r_2>0$
  since $C_{r_2} \subset B^+_{r_1}$). The function
$\tilde \phi:=(\zeta_{i,k}\circ \zeta_{i,-k})(U_1)$ belongs to
$H^1_{0,S^+_{r_1}}(B^+_{r_1},t^{1-2s})$
thanks to \eqref{eta} and so its trivial extension, still
denoted as $\tilde\phi$, belongs to $H^1(\R^{N+1}_+,t^{1-2s})$.
Moreover by \eqref{H10Sr+} we have that, for any $i=1, \dots, N$,
\begin{equation}\label{eq:9} \Tr(\zeta_{i,k}(\zeta_{i,-k}(\tilde
  \phi)))=\zeta_{i,k}(\Tr( \zeta_{i,-k}(\tilde
  \phi)))=\zeta_{i,k}(\zeta_{i,-k}(\Tr(\tilde \phi))). \end{equation}
Therefore testing \eqref{eq:8} with $\tilde \phi$ we obtain
\begin{align}\label{eq:10}
  \int_{B_{r_1}^+}&t^{1-2s}\zeta_{i,-k}(A \nabla U_1) \cdot \nabla( \zeta_{i,-k}(U_1)) \, dz
  +\int_{B_{r_1}^+}t^{1-2s} f \,\,(\zeta_{i,k}\circ \zeta_{i,-k})(U_1) \, dz \\
  &\qquad=\int_{B_{r_1}'} \zeta_{i,-k} (\rho g)\Tr(\zeta_{i,-k}(U_1))  \, dx  \notag +\int_{B_{r_1}'}\zeta_{i,-k}( h\Tr(U_1)) \Tr(\zeta_{i,-k}(U_1) )\,
     dx, \notag \end{align} thanks to \eqref{eq:9}. From \eqref{eq:10}
     it follows that, for any $i=1, \dots, N$,
     \begin{align}\label{eq:11}
    &\int_{B_{r_1}^+}t^{1-2s} A \nabla( \zeta_{i,-k}(U_1)) \cdot
      \nabla( \zeta_{i,-k}(U_1)) \, dz \\
       &\notag\le \int_{B_{r_1}^+}t^{1-2s}|\zeta_{i,-k}(A)\nabla(\tau_{i,-k}(U_1)) \cdot \nabla(\zeta_{i,-k}(U_1))| \, dz \\
   &\quad+\int_{B_{r_1}^+}t^{1-2s}|f \,\,(\zeta_{i,k}\circ \zeta_{i,-k})(U_1)| \, dz +\int_{B_{r_1}'} |\zeta_{i,-k} (\rho g)\Tr(\zeta_{i,-k}(U_1))|  \, dx \notag \\
   &\quad+\int_{B_{r_1}'}|\zeta_{i,-k}( h) \Tr(\tau_{i,-k}(U_1)) \Tr(\zeta_{i,-k}(U_1) )|\, dx+\int_{B_{r_1}'}|h| |\Tr(\zeta_{i,-k}(U_1) )|^2\, dx, \notag 
\end{align}
thanks to \eqref{increment-product} and \eqref{eq:9}. Now we estimate each term of the right  hand side of \eqref{eq:11}.
We start by noticing that, thanks to \eqref{A-regularity}, there
exists a constant $\Lambda >0$
(depending only on the Lipschitz constants of the entries
  of $A$)
such that 
\begin{multline}\label{eq:12}
	\norm{\zeta_{i,-k}(A)(z)}_{\mathcal{L}(\R^{N+1},\R^{N+1})}
        \le \Lambda \quad \text{for all } i=1,\dots,
        N,\ z\in B_{r_1}^+, \\  \text{and } k \in \left(r_2-\sqrt{r_1^2-r_2^2},\sqrt{r_1^2-r_2^2}-r_2\right),
\end{multline}
where $	\norm{\zeta_{i,-k}(A)
  (z)}_{\mathcal{L}(\R^{N+1},\R^{N+1})}$ is the norm
of $\zeta_{i,-k}(A) (z)$ as a linear operator from $\R^{N+1}$ to $\R^{N+1}$.
Then by \eqref{eq:12}, H\"older's inequality and  Cauchy-Schwartz's inequality in $\R^{N+1}$,
\begin{multline}\label{ineq-estimates-1}
\int_{B_{r_1}^+}t^{1-2s}|\zeta_{i,-k}(A)\nabla(\tau_{i,-k}(U_1))\cdot
\nabla(\zeta_{i,-k}(U_1))| \, dz\\
\le\Lambda \norm{\nabla(\tau_{i,-k}(U_1))}_{L^2(B^+_{r_1},t^{1-2s})}
\norm{\nabla(\zeta_{i,-k}(U_1))}_{L^2(B^+_{r_1},t^{1-2s})}.	
\end{multline}
By  H\"older's inequality and \eqref{incremet-vs-derivative-frac},
\begin{equation}\label{ineq-estimates-3}
  \int_{B_{r_1}^+}t^{1-2s}|f \,\,(\zeta_{i,k}\circ \zeta_{i,-k})(U_1)| \, dz  \le 	\norm{f}_{L^2(B^+_{r_1},t^{1-2s})}	\norm{\nabla(\zeta_{i,-k}(U_1))}_{L^2(B^+_{r_1},t^{1-2s})}.
\end{equation}
Furthermore by \eqref{ineq-Sobolev-trace} and  H\"older's inequality
\begin{equation}\label{ineq-estimates-4}
\int_{B_{r_1}'} |\zeta_{i,-k} (\rho g)\Tr(\zeta_{i,-k}(U_1))|  \, dx \\
\le \mathcal{S}_{N,s}^{\frac12}\norm{\rho g}_{W^{1,\frac{2N}{N+2s}}(B_{r_1}')} \norm{\nabla (\zeta_{i,-k}(U_1)}_{L^2(B_{r_1}^+,t^{1-2s})},	
\end{equation}
\begin{multline}\label{ineq-estimates-5}
\int_{B_{r_1}'}|\zeta_{i,-k}( h)\Tr( \tau_{i,-k}(U_1)) \Tr(\zeta_{i,-k}(U_1) )|\, dx \\
\le \mathcal{S}_{N,s}\norm{h}_{W^{1,\frac{N}{2s}}(B_{r_1}')}	\norm{\nabla (\tau_{i,-k}(U_1)}_{L^2(B_{r_1}^+,t^{1-2s})}\norm{\nabla (\zeta_{i,-k}(U_1)}_{L^2(B_{r_1}^+,t^{1-2s})},
\end{multline}
and 
\begin{equation}\label{ineq-estimates-6}
\int_{B_{r_1}'}|h| |\Tr(\zeta_{i,-k}(U_1) )|^2\, dx \le \mathcal{S}_{N,s}\norm{h}_{W^{1,\frac{N}{2s}}(B_{r_1}')}\norm{\nabla (\zeta_{i,-k}(U_1)}^2_{L^2(B_{r_1}^+,t^{1-2s})}.
\end{equation}
Putting together \eqref{eq:11}, \eqref{ineq-estimates-1},
\eqref{ineq-estimates-3}, \eqref{ineq-estimates-4},
\eqref{ineq-estimates-5}, \eqref{ineq-estimates-6} and
\eqref{A-ellipticity} we obtain that
\begin{align}\label{ineq-increment-ratio-L2-bounded}
  &\bigg(\la_1-\mathcal{S}_{N,s}\norm{h}_{W^{1,\frac{N}{2s}}(B_{r_1}')}\bigg)\norm{\nabla (\zeta_{i,-k}(U_1)}_{L^2(B_{r_1}^+,t^{1-2s})} \\
  &\le  \Lambda \norm{\nabla(\tau_{i,-k}(U_1))}_{L^2(B^+_{r_1},t^{1-2s})}+\norm{f}_{L^2(B^+_{r_1},t^{1-2s})}+\mathcal{S}_{N,s}^{\frac12}\norm{\rho g}_{W^{1,\frac{2N}{N+2s}}(B_{r_1}')} \notag \\
  &\qquad+
    \mathcal{S}_{N,s}\norm{h}_{W^{1,\frac{N}{2s}}(B_{r_1}')}\norm{\nabla
    (\tau_{i,-k}(U_1))}_{L^2(B_{r_1}^+,t^{1-2s})}\notag\\
  &=
    (\Lambda+\mathcal{S}_{N,s}\norm{h}_{W^{1,\frac{N}{2s}}(B_{r_1}')})
    \norm{\nabla U_1}_{L^2(B^+_{r_1},t^{1-2s})}\notag\\
  &\qquad+\norm{f}_{L^2(B^+_{r_1},t^{1-2s})}+\mathcal{S}_{N,s}^{\frac12}C_\rho\norm{g}_{W^{1,\frac{2N}{N+2s}}(B_{r_1}')} \notag
\end{align}
for some positive constant $C_\rho>0$ depending only on $\|\nabla \rho\|_{L^\infty(B'_{r_1})}$,
   where we have used the fact that
  $\norm{\nabla(\tau_{i,-k}(U_1))}_{L^2(B^+_{r_1},t^{1-2s})}=
  \norm{\nabla U_1}_{L^2(B^+_{r_1},t^{1-2s})}$ since $\mathop{\rm
    supp}\tau_{i,-k}(U_1)\subset B_{r_1}^+\cup B_{r_1}'$ for all $|k| <\sqrt{r_1^2-r_2^2}-r_2$.

Eventually choosing $r_1$ smaller form the beginning, we may suppose that 
\[\la_1- \mathcal{S}_{N,s} \norm{h}_{W^{1,\frac{N}{2s}}(B'_{r_1})}>0,\]
by the absolute continuity of the integral. We conclude that for any
$i=1, \dots, N$ and any $j=1,\dots, N+1$
\[
  \left\{\pd{(\zeta_{i,-k}(U_1))}{z_j}: |k|<\sqrt{r_1^2-r_2^2}-r_2 \right\} \quad
  \text{ is bounded in } L^2(B^+_{r_1},t^{1-2s}).
\]
It follows that, for any $i=1,\dots,N$ and $j=1,\dots,N+1$, there exist $\psi_{i,j} \in  L^2(B^+_{r_1},t^{1-2s})$ and a sequence $k_n \to 0$ as $n \to \infty$ 
such that $\pd{(\zeta_{i,-k_n}(U_1))}{z_j} \rightharpoonup \psi_{i,j}$ weakly in $L^2(B^+_{r_1},t^{1-2s})$  as $n \to \infty$.
Furthermore by \eqref{incremet-vs-derivative-frac}, the family $\{(\zeta_{i,-k_n}(U_1)):n \in \mathbb{N}\}$ is bounded in $L^2(B^+_{r_1},t^{1-2s})$ and so there exists a function $\varphi_i \in 
L^2(B^+_{r_1},t^{1-2s})$ such that $\zeta_{i,-k_n}(U_1) \rightharpoonup \varphi_i$ weakly in $L^2(B^+_{r_1},t^{1-2s})$ for any $i=1,\dots,N$, up to a subsequence.
For any test function $\phi \in C^{\infty}_c(B_{r_1}^+)$, thanks to   the  Dominated Converge Theorem,
\begin{align}\label{weak-derivita}
	\int_{B_{r_1}^+}\varphi_i \phi \, dz&=\lim_{n\to
                                              \infty}\int_{B_{r_1}^+}\zeta_{i,-k_n}(U_1)
                                              \phi \, dz \\
  \notag&=- \lim_{n\to \infty}\int_{B_{r_1}^+}U_1\zeta_{i,k_n}(\phi)  \, dz
	=-\int_{B_{r_1}^+}U_1\pd{\phi}{z_i}  \, dz= \int_{B_{r_1}^+}\pd{U_1}{z_i}\phi \, dz	
\end{align}
and hence $\varphi_i=\pd{U_1}{z_i}$, i.e. $\zeta_{i,-k_n}(U_1) \rightharpoonup \pd{U_1}{z_i}$ weakly in $L^2(B^+_{r_1},t^{1-2s})$, up to a subsequence.
Furthermore, for any $\phi \in C^{\infty}_c(B^+_{r_1})$, $i=1, \dots,
N$, and $j= 1,\dots , N+1$, we have that  
\begin{align*}
	\int_{B^+_{r_1}}\psi_{i,j} \phi \,dz&=\lim_{n \to
                                              \infty}\int_{B^+_{r_1}}\pd{(\zeta_{i,-k_n}(U_1))}{z_j}\phi
                                              \,dz\\
  &=-\lim_{n \to \infty}\int_{B^+_{r_1}}\zeta_{i,-k_n}(U_1)\pd{\phi}{z_j} dz
	=-\int_{B^+_{r_1}}\pd{U_1}{z_i}\pd{\phi}{z_j} dz,
\end{align*}
that is, $\psi_{i,j}=\pd{}{z_j}\pd{U_1}{z_i}$. Therefore the
distributional derivative of $\pd{U_1}{z_i}$ respect to the variable
$z_j$ belongs to $L^2(B_{r_1}^+,t^{1-2s})$ for any
$j=1, \dots, N+1$, and $i= 1,\dots , N$, i.e.
\begin{equation}\label{eq:3}
\nabla_x U_1
\in H^1(B_{r_1}^+,t^{1-2s}).
\end{equation}
Furthermore,  estimate
  \eqref{ineq-increment-ratio-L2-bounded} and  weak lower
  semi-continuity of the $L^2(B_{r_1}^+,t^{1-2s})$-norm imply that
  \begin{equation}\label{eq:17}
    \|\nabla_x U_1\|_{H^1(B_{r_1}^+,t^{1-2s})}\|\leq C
\left(
\norm{\nabla U_1}_{L^2(B^+_{r_1},t^{1-2s})}+\norm{f}_{L^2(B^+_{r_1},t^{1-2s})}+\norm{g}_{W^{1,\frac{2N}{N+2s}}(B_{r_1}')} \right)
\end{equation}
for a positive constant $C=C(N,s,
\norm{h}_{W^{1,\frac{N}{2s}}(B'_{r_1})}, \lambda_1,
\Lambda, \norm{\nabla \rho}_{L^{\infty}(B^+_{r_1})})>0$.

This also implies that
  $\nabla_x(t^{1-2s}\frac{\partial U_1}{\partial t})\in
  L^2(B_{r_1}^+,t^{2s-1})$ with norm estimated as above. 
To
conclude, it remains to show that   $\frac{\partial}{\partial
  t}(t^{1-2s}\frac{\partial U_1}{\partial t})
\in L^2(B_{r_1}^+,t^{2s-1})$.

To this aim we observe that, for any $\phi \in C^{\infty}_c(B^+_{r_1})$,
\eqref{eq:8}, the Divergence Theorem,
\eqref{A-form}, and
\eqref{A-ellipticity} imply that
\begin{align*}
  \int_{B_{r_1}^+}t^{1-2s}&\pd{U_1}{t}\pd{\phi}{t} \,
    dz=\int_{B_{r_1}^+}t^{1-2s}\alpha
    \pd{U_1}{t}\pd{}{t}\left(\frac{\phi}{\alpha}\right) \, dz
    +\int_{B_{r_1}^+}t^{1-2s}\pd{\alpha}{t} \pd{U_1}{t}\frac{\phi}{\alpha} \, dz\\
  &=-\int_{B^+_{r_1}}t^{1-2s} B \nabla_x U_1 \cdot \nabla_x
    \left(\frac{\phi}{\alpha}\right) \, dz
   - \int_{B^+_{r_1}}t^{1-2s}f \frac{\phi}{\alpha}  \, dz +\int_{B_{r_1}^+}t^{1-2s}\pd{\alpha}{t}
                                                            \pd{U_1}{t}\frac{\phi}{\alpha} \, dz\\
  &=-\int_{B^+_{r_1}}t^{1-2s}
    \frac{1}{\alpha}\left(-\dive\nolimits_{x}( B \nabla_x U_1) +f-\pd{\alpha}{t} \pd{U_1}{t}\right) \phi \, dz.
\end{align*} 
Thanks to \eqref{A-regularity} \eqref{A-ellipticity},
\eqref{b-c-hypo}, \eqref{eq:7}, \eqref{eq:3}, and
H\"older's inequality, we then conclude that
\begin{equation*}
t^{2s-1}\frac{\partial}{\partial
  t}\left(t^{1-2s}\frac{\partial U_1}{\partial t}\right)=\frac{1}{\alpha}\left(-\dive( B \nabla_x U_1) +f-\pd{\alpha}{t} \pd{U_1}{t}\right)	 \in L^2(B^+_{r_1},t^{1-2s})
\end{equation*}
which implies that $\frac{\partial}{\partial
  t}(t^{1-2s}\frac{\partial U_1}{\partial t})
\in L^2(B_{r_1}^+,t^{2s-1})$ and hence 
\begin{equation}\label{eq:14}
  t^{1-2s}\frac{\partial U_1}{\partial t}\in H^1(B_{r_1}^+,t^{2s-1}),
\end{equation}
with $H^1(B_{r_1}^+,t^{2s-1})$-norm estimated as in \eqref{eq:17}.

Since $\eta\equiv1$ on $B_{r_3}^+$ we have thereby proved that 
 \begin{equation*}
\nabla_x U \in H^1(B_{r_3}^+,t^{1-2s})\quad\text{and}\quad
t^{1-2s}\frac{\partial U}{\partial t}\in H^1(B_{r_3}^+,t^{2s-1})
\end{equation*}
and, in view of \eqref{eq:7},
\begin{multline}\label{eq:23}
  \|\nabla_x U\|_{H^1(B_{r_3}^+,t^{1-2s})}+
  \left\|t^{1-2s}\frac{\partial U}{\partial t}\right\|_{H^1(B_{r_3}^+,t^{2s-1})}
  \\\leq C
  \left(
    \norm{U}_{H^1(B^+_{R},t^{1-2s})}+\norm{{c}}_{L^2(B^+_{R},t^{1-2s})}
    +\norm{g}_{W^{1,\frac{2N}{N+2s}}(B_{R}')} \right)
\end{multline}
for a positive constant $C$ depending only on $N$, $s$, $r_1$, $r_3$,
$\norm{h}_{W^{1,\frac{N}{2s}}(B'_{R})}$, $\lambda_1$, $\norm{A}_{W^{1,\infty}(B^+_{R},\R^{(N+1)^2})}$.

Reasoning in a similar way we can show that, for any $r \in (0,R)$
and any $x \in \overline {B_{r}'}$, there exists  
$r_x >0$ such that
$B^+_{r_x}(x) \subset B^+_R$,
$\nabla_x U \in H^1(B_{r_x}^+(x),t^{1-2s})$, and
  $t^{1-2s}\frac{\partial U}{\partial t}\in H^1(B_{r_x}^+(x),t^{2s-1})$,
where 
\begin{equation*}
B^+_{r_x}(x):=\{z=(y,t) \in \R^{N+1}_+:|(x,0)-(y,t)| < r_x\}.
\end{equation*}
Then we can cover $\overline {B_{r}'}$ with a finite family of open sets
$\{B^+_{r_{x_i}}(x_i)\}_{i \in I}$ such that
\[
\nabla_x U \in H^1(B_{r_{x_i}}^+(x_i),t^{1-2s})\text{ and }
  t^{1-2s}\frac{\partial U}{\partial t}\in
  H^1(B_{r_{x_i}}^+(x_i),t^{2s-1}) \quad\text{for all $i \in I$}
\]
and an estimate of type \eqref{eq:23} is satisfied.
Furthermore, letting $B^+_{R,\delta}$ be as in \eqref{Br+-delta}, it
is easy to verify that  $t^{1-2s}A \in
C^{0,1}(\overline{B^+_{R,\delta}})$ and  $t^{1-2s}c \in
L^2(B^+_{R,\delta})$  for any $\delta \in (0,R)$, since the weight
$t^{1-2s}$ is Lipschitz continuous on $\overline {B_{R,\delta}^+}$. Then we may conclude that  $U \in
H^2(B_{r,\delta}^+,t^{1-2s})$
for any $r \in (0,R)$ and $\delta \in (0,R)$
by classical elliptic regularity  theory (see e.g. \cite[Theorem
8.8]{MR1814364}).

Combining the above information we obtain \eqref{eq:15} and \eqref{eq:estimate}.
\end{proof}

\begin{remark}
The regularity result of Theorem   \ref{H2-regularity}
  applies also to problems of the form
  \begin{equation*}
\begin{cases}
-\dive(t^{1-2s}A\nabla U)+t^{1-2s}b U + t^{1-2s}c =0, &\text{ on } B_R^+,\\
 \lim_{t \to 0^+}t^{1-2s}A \nabla U \cdot \nu =h \Tr(U)+g , &\text{ on } B_R',  
\end{cases}	
\end{equation*}
with $c,h,g$ as in
assumptions \eqref{g-h-hypo} and \eqref{b-c-hypo}, and  a potential $b
\in L^{q_{N,s}}(B_R^+,t^{1-2s})$, where 
\begin{equation}\label{q_{N,s}}
q_{N,s}:=
\begin{cases}
N+2-2s, &\text{ if } s \in \left(0,\frac{1}{2}\right),\\
N+1, &\text{ if } s \in \left[\frac{1}{2},1\right).
\end{cases}	
\end{equation}
Indeed if $b
\in L^{q_{N,s}}(B_R^+,t^{1-2s})$ and $U\in  H^1(B_R^+,t^{1-2s})$, then
$bU\in  L^2(B_R^+,t^{1-2s})$ in view of H\"older's inequality and the
following Sobolev-type embedding result. 
\begin{lemma}
  For any $r >0$, $H^1(B_r^+,t^{1-2s})\subset
  L^{2^{**}_s}(B_r^+,t^{1-2s})$, where 
\begin{equation}\label{2**s}
2^{**}_s:= \min\left\{2\frac{N+2-2s}{N-2s},2\frac{N+1}{N-1}\right\}=
\begin{cases}
2\frac{N+2-2s}{N-2s}, &\text{ if } s \in \left(0,\frac{1}{2}\right),\\
2\frac{N+1}{N-1}, &\text{ if } s \in \left[\frac{1}{2},1\right).
\end{cases}	
\end{equation} 
Furthermore, there exists a constant $K_{N,s} >0$ such that,  for any $r >0$ and any $w \in H^1(B_r^+,t^{1-2s})$, 
\begin{equation}\label{ineq-Sobolev-weighted}
\left(\int_{B_r^+} t^{1-2s} |w|^{2^{**}_s}\, dz \right)^{\frac{2}{2^{**}_s}} \le K_{N,s,r} \left(\frac{1}{r^2}\int_{B_r^+} t^{1-2s} w^2 \, dz+\int_{B_r^+} t^{1-2s} |\nabla w|^2 \, dz\right),
\end{equation}
where
\begin{equation}\label{Knsr}
K_{N,s,r}:= 
\begin{cases}
K_{N,s}, &\text{ if } s \in \left(0,\frac{1}{2}\right),\\
K_{N,s}(2s-1)r^{\frac{2}{N+1}}, &\text{ if } s \in \left[\frac{1}{2},1\right).
\end{cases}	
\end{equation}
\end{lemma}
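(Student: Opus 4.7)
The plan is to prove \eqref{ineq-Sobolev-weighted} by splitting into the degenerate regime $s\in(0,1/2)$ and the singular regime $s\in[1/2,1)$, after first reducing to the unit half-ball by scaling.

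I would start with the rescaling $\tilde w(\zeta):=w(r\zeta)$ for $\zeta\in B_1^+$. A direct change of variables shows that
\[
\int_{B_r^+}t^{1-2s}|w|^q\,dz=r^{N+2-2s}\int_{B_1^+}\tau^{1-2s}|\tilde w|^q\,d\zeta,\qquad \int_{B_r^+}t^{1-2s}|\nabla w|^2\,dz=r^{N-2s}\int_{B_1^+}\tau^{1-2s}|\nabla\tilde w|^2\,d\zeta,
\]
so that the factor $1/r^2$ placed in front of $\int t^{1-2s}w^2$ in \eqref{ineq-Sobolev-weighted} is exactly what is needed to make the right-hand bracket scale homogeneously as $r^{N-2s}$. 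Hence the inequality on $B_r^+$ is equivalent to the one on $B_1^+$, with $K_{N,s,r}$ absorbing the power $r^{(N+2-2s)(2/2^{**}_s)-(N-2s)}$ measuring the mismatch between the scalings of the two sides.

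For $s\in(0,1/2)$, I would extend $\tilde w$ by even reflection across $\{\tau=0\}$ to $\hat w\in H^1(B_1,|\tau|^{1-2s})$. Since $|\tau|^{1-2s}$ is a Muckenhoupt $A_2$ weight on $\R^{N+1}$, the Fabes--Kenig--Serapioni weighted Poincar\'e--Sobolev inequality applies on the Lipschitz ball $B_1$. For this specific weight the effective homogeneous dimension is $(N+1)+(1-2s)=N+2-2s$ (recognisable, for instance, by viewing $|\tau|^{1-2s}d\zeta$ as a radial restriction of Lebesgue measure in that many cylindrical variables), which produces the sharp Sobolev exponent $2^{**}_s=2(N+2-2s)/(N-2s)$ with a universal constant $K_{N,s}$, matching the first branch of \eqref{Knsr}.

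For $s\in[1/2,1)$, one has $1-2s\le 0$, hence $\tau^{1-2s}\ge 1$ on $B_1^+$ and $H^1(B_1^+,\tau^{1-2s})\hookrightarrow H^1(B_1^+)$ continuously. The classical Sobolev embedding on the Lipschitz domain $B_1^+\subset\R^{N+1}$ then gives $\tilde w\in L^{2(N+1)/(N-1)}(B_1^+)$, and to reinstate the weight I would use H\"older's inequality together with the explicit computation $\int_0^1\tau^{1-2s}d\tau=1/(2-2s)$, which is the source of the factor $(2s-1)$ (equivalently $(2-2s)^{-1}$) in $K_{N,s,r}$. The $r^{2/(N+1)}$ power then arises when the scaling of Step~1 is unwound against the non-scale-invariant exponent $2^{**}_s=2(N+1)/(N-1)<2(N+2-2s)/(N-2s)$.

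The hard part is precisely the singular case. A naive pairing of H\"older with unweighted Sobolev cannot deliver the full exponent $2(N+1)/(N-1)$ against the singular weight, because no conjugate pair simultaneously keeps $\tau^{1-2s}$ integrable to a positive power and keeps $|\tilde w|^{2^{**}_s}$ within the admissible unweighted Sobolev range on $B_1^+$. To get around this I would use a dyadic decomposition of $B_1^+$ in the variable $\tau$, estimating the slices $\{2^{-k-1}<\tau\le 2^{-k}\}$ by unweighted Sobolev together with the pointwise bound $\tau^{1-2s}\le 2^{(2s-1)(k+1)}$ and summing the resulting geometric series---a procedure that converges precisely because $1-2s>-1$ and that naturally produces both the $(2s-1)$ degeneracy as $s\downarrow 1/2$ and, after rescaling, the $r^{2/(N+1)}$ power in $K_{N,s,r}$.
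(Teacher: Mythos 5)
Your scaling step is exactly the paper's (its proof is essentially a citation: a scaling argument plus Felli--Ferrero, Appendix A.1, and Opic--Kufner, Theorem 19.20), and your treatment of the degenerate regime $s\in(0,1/2)$ is acceptable at that same citation level: the homogeneous-dimension heuristic does identify the sharp exponent $2(N+2-2s)/(N-2s)$, though for non-integer $1-2s$ you are invoking the known sharp power-weight Sobolev embedding rather than deriving it from the generic Fabes--Kenig--Serapioni statement, which by itself only yields \emph{some} exponent $2k>2$. The genuine gap is exactly where you locate the difficulty, namely the singular case $s\in[1/2,1)$, and the dyadic decomposition you propose does not close it. On the slice $S_k=\{2^{-k-1}<\tau\le 2^{-k}\}$ you bound $\tau^{1-2s}\le 2^{(2s-1)(k+1)}$ and assert that the resulting series converges ``precisely because $1-2s>-1$''. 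That criterion governs $\sum_k 2^{(2s-1)k}\,|S_k|\sim\sum_k 2^{-(2-2s)k}$, i.e.\ it is the right condition only if $\int_{S_k}|\tilde w|^{2^{**}_s}\,d\zeta\lesssim 2^{-k}$ uniformly in $k$. Such a bound would require a uniform-in-$\tau$ trace estimate $H^1(B_1^+)\to L^{2(N+1)/(N-1)}(\{\tau=c\})$; but the trace of $H^1$ on a hyperplane lands only in $H^{1/2}\hookrightarrow L^{2N/(N-1)}$, and $\tfrac{2(N+1)}{N-1}>\tfrac{2N}{N-1}$, so the slice bound fails at the exponent $2^{**}_s$. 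Without it the factors $2^{(2s-1)k}\ge1$ multiply slice integrals that need not decay (the $L^{2^{**}_s}$-mass of $\tilde w$ may concentrate near $\tau=0$) and the series need not converge; applying the Sobolev inequality on each thin slab separately is even worse, since its constant degenerates like $(\text{thickness})^{1-2^{**}_s/2}$. What is actually needed is an interpolation between the bulk Sobolev exponent and the trace exponent, or equivalently a one-dimensional Hardy-type inequality in $\tau$ combined with Minkowski's integral inequality and the Sobolev embedding in $x$; this is precisely what the cited Theorem 19.20 of Opic--Kufner (and Theorem 2.4 of Sire--Terracini--Vita) encapsulates and what the paper invokes instead of reproving.

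A secondary issue is your bookkeeping of the constant in the singular case. Unwinding the scaling of your Step 1 against $2^{**}_s=2(N+1)/(N-1)$ produces the factor $r^{2(2s-1)/(N+1)}$, not $r^{2/(N+1)}$, and $(2s-1)$ is not ``equivalently $(2-2s)^{-1}$'': the former vanishes at $s=1/2$, where the inequality reduces to the classical unweighted Sobolev inequality and certainly holds with a positive constant, while the latter equals $1$ there and blows up as $s\to1$. Your computation in fact suggests that the constant in \eqref{Knsr} should be read as $K_{N,s}\,r^{(2s-1)\frac{2}{N+1}}$, but the derivation of the factor as you state it is not correct.
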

\begin{proof}
The claim follows from a scaling argument,  \cite[Appendix
A.1]{MR4169657} and \cite[Theorem 19.20]{MR1069756}, see
  also \cite[Theorem 2.4]{STV}.
\end{proof}
\end{remark}

\section{Proof of the Pohozaev-type identity \eqref{eq-Poho-identity}}

\begin{proof}[\bf{Proof of Proposition \ref{p:poho}}]
The following Rellich-Ne\u cas type identity 
\begin{multline*}
\dive\left(t^{1-2s}(A\nabla U \cdot \nabla U) \beta -2t^{1-2s}( \nabla U \cdot \beta) A \nabla U\right)=t^{1-2s}A\nabla U \cdot \nabla U\dive(\beta)\\
-2\beta\cdot \nabla U \dive\left(t^{1-2s} A \nabla U\right)+(d(t^{1-2s}A)\nabla U\nabla U)\cdot\beta -2J_\beta (t^{1-2s}A \nabla U)\cdot \nabla U
\end{multline*} 
holds in a distributional sense in $B_R^+$. In view of \eqref{A-form}
and \eqref{prob-regularity}
it can be rewritten as
\begin{multline}\label{eq:16}
\dive\left(t^{1-2s}(A\nabla U \cdot \nabla U) \beta -2t^{1-2s}( \nabla U \cdot \beta) A \nabla U\right)\\=t^{1-2s}A\nabla U \cdot \nabla U\dive(\beta)
-2 t^{1-2s} c (\beta\cdot \nabla U) +
t^{1-2s}dA\nabla U\nabla U \cdot \beta \\+(1-2s)t^{1-2s}\frac{\alpha}{\mu} A\nabla U \cdot\nabla U
-2J_\beta (t^{1-2s}A \nabla U)\cdot \nabla U
\end{multline}
 with $dA$ as in
\eqref{dA}.

Let $r\in(0,R)$. By Theorem \ref{H2-regularity} an Remark
\ref{everything-bounded},  letting $\beta=(\beta_1,\dots,\beta_N,\alpha/\mu)$ (see \eqref{A-form}and \eqref{mu-beta}), we
have that
\begin{equation}\label{eq:24}
  \nabla U \cdot  \beta=\nabla_x U\cdot(\beta_1,\dots,\beta_N)+\frac{\alpha}{\mu}tU_t \in H^1(B_r^+,t^{1-2s}).
\end{equation}
In particular,  to prove that $\frac{\partial}{\partial t}(tU_t)\in L^2(B_r^+,t^{1-2s})$, it is useful to observe that
 \[ \frac{\partial}{\partial t}(tU_t)=t^{2s}\frac{\partial}{\partial t}(t^{1-2s}U_t)+2sU_t\]
and recall that $\frac{\partial}{\partial t}\left(t^{1-2s}\frac{\partial U}{\partial t}\right)\in L^2(B_r^+,t^{2s-1})$ by \eqref{eq:15}.

We observe that $tU_t=t^{2s}(t^{1-2s}U_t)$, with $t^{2s}\in
  H^1(B_r^+,t^{1-2s})$ and $t^{1-2s}U_t\in H^1(B_r^+,t^{2s-1})$ by
  \eqref{eq:15}; hence \eqref{eq-trace-product-2} implies that
  $\Tr(tU_t)=\Tr(t^{2s})\Tr(t^{1-2s}U_t)= 0$, so that from \eqref{eq:24},
  \eqref{eq-trace-product}, and \eqref{Tr-graient-formula} we deduce  that
  \begin{equation}\label{eq:28}
\Tr(\nabla U \cdot  \beta)=\Tr(\nabla_x
U\cdot(\beta_1,\dots,\beta_N))+\Tr\Big(\frac{\alpha}{\mu}tU_t\Big)
=\nabla_x \Tr (U)\cdot\beta'.
 \end{equation}
From  \eqref{prob-regularity}, \eqref{b-c-hypo}, and \eqref{eq:24} it follows that
\begin{equation}\label{eq:27}
  \dive(t^{1-2s}( \nabla U \cdot \beta) A \nabla
  U)=t^{1-2s}c (\nabla U \cdot \beta)+t^{1-2s}A \nabla U\cdot
  \nabla(\nabla U \cdot \beta)\in L^1(B_r^+)
\end{equation}
so that, in view of \eqref{eq:16}, \eqref{everything-bounded},
\eqref{b-c-hypo}, and \eqref{eq:24} we obtain also that
\begin{equation}\label{eq:26}
  \dive\left(t^{1-2s}(A\nabla U \cdot \nabla U) \beta\right)
\in L^1(B_r^+).
\end{equation}
Applying the Divergence Theorem on the set $B^+_{r,\delta}$  defined
in \eqref{Br+-delta} (and recalling from Theorem
  \ref{H2-regularity} or classical elliptic regularity theory that
  $U\in H^2(B_{r,\delta}^+)$), we have that
\begin{multline}\label{eq:18}
\int_{B_{r,\delta}^+}\dive(t^{1-2s}(A\nabla U \cdot \nabla U) \beta ) \, dz= r \int_{S_{r,\delta}^+} t^{1-2s}A \nabla U \cdot \nabla U \, dS\\
-\delta^{2-2s}\int_{B'_{\sqrt{r^2-\delta^2}}}\frac{\alpha(x,\delta)}{\mu(x,\delta)}(A \nabla U \cdot \nabla U)(x,\delta) \, dx
\end{multline}
with $S^+_{r,\delta}$ as in \eqref{Br+-delta}. 
We claim that there exists a sequence $\delta_n \to 0^+$ such that   
\begin{equation}\label{limit-delta-2-0}
  \lim_{n \to \infty} \delta_n^{2-2s}\int_{B'_{\sqrt{r^2-\delta_n^2}}}\frac{\alpha(x,\delta_n)}{\mu(x,\delta_n)}(A \nabla U \cdot \nabla U)(x,\delta_n) \, dx=0.	
\end{equation}
To prove \eqref{limit-delta-2-0} we argue by contradiction.
If the claim does not hold, then  there exist a constant $C >0$ and $\bar r \in (0,r)$ such that 
\begin{equation}\label{ineq-delta-2}
\delta^{1-2s}\int_{B'_{r}}\frac{\alpha(x,\delta)}{\mu(x,\delta)}(A \nabla U \cdot \nabla U)(x,\delta) \, dx \ge \frac{C}{\delta} \quad \text{for any } \delta \in (0,\bar r).
\end{equation}
We may suppose that $B_{\bar r}' \times (0,\bar r) \subset B^+_{r}$ and  integrating \eqref{ineq-delta-2} in $(0,\bar r)$ we obtain 
\begin{equation*}
\int_{B^+_R}t^{1-2s}\frac{\alpha}{\mu}A \nabla U \cdot \nabla U \, dz \ge \int_0^{\bar r} t^{1-2s}\left( \int_{B'_r}\frac{\alpha(x,t)}{\mu(x,t)}(A \nabla U \cdot \nabla U)(x,t)\,dx\right)\, dt
\ge C \int_0^{\bar r}\frac{1}{t} \, dt=+\infty,
\end{equation*}
a contradiction since $\frac{\alpha}{\mu}A \nabla U \cdot \nabla U \in L^1(B^+_{R},t^{1-2s})$  thanks to 
\eqref{everything-bounded} and H\"older's inequality.
Therefore passing to the limit as $n \to \infty $ and
$\delta=\delta_n$ in \eqref{eq:18} and taking into account~\eqref{eq:26} we conclude that
\begin{equation}\label{eq:19}
\int_{B_r^+}\dive(t^{1-2s}(A\nabla U \cdot \nabla U) \beta ) \, dz= r
\int_{S_r^+} t^{1-2s}A \nabla U \cdot \nabla U \, dS\quad\text{for
  a.e. }r\in(0,R).
\end{equation}
From \eqref{eq:27} and  \eqref{eq-div}
it follows that
\begin{align} \label{eq:20}
&\int_{B_r^+}\dive(t^{1-2s}(\nabla U\cdot  \beta) A\nabla U )\, dz\\
& =\int_{B_r^+}t^{1-2s}c(\nabla U\cdot  \beta) \, dz +\int_{B_r^+}t^{1-2s}A\nabla U  \cdot \nabla (\nabla U \cdot  \beta )\, dz\notag \\
&\notag = \frac{1}{ r}\int_{S_r^+}t^{1-2s} (A \nabla U \cdot z)( \nabla U \cdot  \beta)  \,dS +\int_{B_r'}[h \Tr( U)+g ] \Tr(\nabla U \cdot  \beta)	\, dx\\
  &\notag=r\int_{S_r^+}t^{1-2s}\frac{| A \nabla U \cdot \nu|^2} {\mu}\,dS+\int_{B_r'}[h \Tr(U)+g] ( \nabla_x\Tr(U)\cdot\beta')	\, dx, \notag
\end{align}
thanks to \eqref{A-regularity}, \eqref{mu-beta}, and
  \eqref{eq:28}. We observe that $\beta'h\in
  W^{1,\frac N{2s}}(B_r',\R^N)$ in view of \eqref{g-h-hypo} and
  \eqref{everything-bounded} and $(\Tr(U))^2\in
  W^{1,\frac N{N-2s}}(B_r')$ thanks to \eqref{eq:15} and
  \eqref{Tr-restiction-H2}; then an  integration by parts on $B_r'$ yields
\begin{align}\label{eq:21}
&\int_{B_r'}h \Tr(U)   (\nabla_x\Tr(U)\cdot\beta')\, dx=\frac12 \int_{B_r'}  \nabla_x(\Tr(U))^2\cdot(h\beta')\, dx \\
&=\frac{r}{2} \int_{S_r'} h|\Tr(U)|^2 \, dS'-\frac{1}{2}\int_{B_r'}(\mathop{\rm{div}}\nolimits_x(\beta')h +\beta'\cdot \nabla h)|\Tr(U)|^2 \, dx. \notag
\end{align}
Moreover $\beta'g\in
  W^{1,\frac {2N}{N+2s}}(B_r',\R^N)$ by \eqref{g-h-hypo} and $\Tr(U)\in
  W^{1,\frac {2N}{N-2s}}(B_r')$ by \eqref{eq:15} and
  \eqref{Tr-restiction-H2}, hence, integrating by parts, we obtain that
\begin{equation}\label{eq:22}
\int_{B_r'} \nabla_x\Tr(U)\cdot(\beta'g)\, dx =r\int_{S_r'} g\Tr(U) \,
dS'-
\int_{B_r'}(\mathop{\rm{div}}\nolimits_x(\beta')g +\beta'\cdot \nabla g)\Tr(U) \, dx. 
\end{equation}
Putting together \eqref{eq:16}, \eqref{eq:19}, \eqref{eq:20},
\eqref{eq:21}, and \eqref{eq:22}, we obtain \eqref{eq-Poho-identity}.
\end{proof}

\bibliographystyle{acm}

\begin{thebibliography}{10}

\bibitem{MR3023003}
{\sc Br\"{a}ndle, C., Colorado, E., de~Pablo, A., and S\'{a}nchez, U.}
\newblock A concave-convex elliptic problem involving the fractional
  {L}aplacian.
\newblock {\em Proc. Roy. Soc. Edinburgh Sect. A 143}, 1 (2013), 39--71.

\bibitem{CabreSire}
{\sc Cabr\'{e}, X., and Sire, Y.}
\newblock Nonlinear equations for fractional {L}aplacians, {I}: {R}egularity,
  maximum principles, and {H}amiltonian estimates.
\newblock {\em Ann. Inst. H. Poincar\'{e} Anal. Non Lin\'{e}aire 31}, 1 (2014),
  23--53.

\bibitem{CS}
{\sc Caffarelli, L., and Silvestre, L.}
\newblock An extension problem related to the fractional {L}aplacian.
\newblock {\em Comm. Partial Differential Equations 32}, 7-9 (2007),
  1245--1260.

\bibitem{DLFS}
{\sc De~Luca, A., Felli, V., and Siclari, G.}
\newblock Strong unique continuation from the boundary for elliptic equations
  with the spectral fractional {L}aplacian.
\newblock {\em \rm In preparation\/}.

\bibitem{de2021strong}
{\sc De~Luca, A., Felli, V., and Vita, S.}
\newblock Strong unique continuation and local asymptotics at the boundary for
  fractional elliptic equations.
\newblock {\em {\rm Preprint 2021}, \tt arXiv:2103.04665\/}.

\bibitem{MR2944369}
{\sc Di~Nezza, E., Palatucci, G., and Valdinoci, E.}
\newblock Hitchhiker's guide to the fractional {S}obolev spaces.
\newblock {\em Bull. Sci. Math. 136}, 5 (2012), 521--573.

\bibitem{FKS}
{\sc Fabes, E.~B., Kenig, C.~E., and Serapioni, R.~P.}
\newblock The local regularity of solutions of degenerate elliptic equations.
\newblock {\em Comm. Partial Differential Equations 7}, 1 (1982), 77--116.

\bibitem{MR3169789}
{\sc Fall, M.~M., and Felli, V.}
\newblock Unique continuation property and local asymptotics of solutions to
  fractional elliptic equations.
\newblock {\em Comm. Partial Differential Equations 39}, 2 (2014), 354--397.

\bibitem{MR4169657}
{\sc Felli, V., and Ferrero, A.}
\newblock Unique continuation principles for a higher order fractional
  {L}aplace equation.
\newblock {\em Nonlinearity 33}, 8 (2020), 4133--4190.

\bibitem{MR1814364}
{\sc Gilbarg, D., and Trudinger, N.~S.}
\newblock {\em Elliptic partial differential equations of second order}.
\newblock Classics in Mathematics. Springer-Verlag, Berlin, 2001.
\newblock Reprint of the 1998 edition.

\bibitem{grisvard}
{\sc Grisvard, P.}
\newblock {\em Elliptic problems in nonsmooth domains}, vol.~69 of {\em
  Classics in Applied Mathematics}.
\newblock Society for Industrial and Applied Mathematics (SIAM), Philadelphia,
  PA, 2011.
\newblock Reprint of the 1985 original [ MR0775683], With a foreword by Susanne
  C. Brenner.

\bibitem{YYLi}
{\sc Jin, T., Li, Y., and Xiong, J.}
\newblock On a fractional {N}irenberg problem, part {I}: blow up analysis and
  compactness of solutions.
\newblock {\em J. Eur. Math. Soc. (JEMS) 16}, 6 (2014), 1111--1171.

\bibitem{kilpelainen}
{\sc Kilpel\"{a}inen, T.}
\newblock Weighted {S}obolev spaces and capacity.
\newblock {\em Ann. Acad. Sci. Fenn. Ser. A I Math. 19}, 1 (1994), 95--113.

\bibitem{MR802206}
{\sc Kufner, A.}
\newblock {\em Weighted Sobolev spaces}.
\newblock A Wiley-Interscience Publication. John Wiley \& Sons, Inc., New York,
  1985.
\newblock Translated from the Czech.

\bibitem{MR2527916}
{\sc Leoni, G.}
\newblock {\em A first course in {S}obolev spaces}, vol.~105 of {\em Graduate
  Studies in Mathematics}.
\newblock American Mathematical Society, Providence, RI, 2009.

\bibitem{MR0350177}
{\sc Lions, J.-L., and Magenes, E.}
\newblock {\em Non-homogeneous boundary value problems and applications. {V}ol.
  {I}}.
\newblock Die Grundlehren der mathematischen Wissenschaften, Band 181.
  Springer-Verlag, New York-Heidelberg, 1972.
\newblock Translated from the French by P. Kenneth.

\bibitem{nirenberg}
{\sc Nirenberg, L.}
\newblock Remarks on strongly elliptic partial differential equations.
\newblock {\em Comm. Pure Appl. Math. 8\/} (1955), 649--675.

\bibitem{MR1069756}
{\sc Opic, B., and Kufner, A.}
\newblock {\em Hardy-type inequalities}, vol.~219 of {\em Pitman Research Notes
  in Mathematics Series}.
\newblock Longman Scientific \& Technical, Harlow, 1990.

\bibitem{STV}
{\sc Sire, Y., Terracini, S., and Vita, S.}
\newblock Liouville type theorems and regularity of solutions to degenerate or
  singular problems part {I}: even solutions.
\newblock {\em Comm. Partial Differential Equations 46}, 2 (2021), 310--361.

\bibitem{STV2}
{\sc Sire, Y., Terracini, S., and Vita, S.}
\newblock Liouville type theorems and regularity of solutions to degenerate or
  singular problems part {II}: odd solutions.
\newblock {\em Math. Eng. 3}, 1 (2021), Paper No. 5, 50.

\end{thebibliography}

\end{document}